\renewcommand*{\eqref}[1]{%
\hyperref[{#1}]{\textup{\tagform@{\!\!\ref*{#1}}}}%
}%\makeatother %add when putting this file to arXiv
\newcommand{\opnorm}{\@ifstar\@opnorms\@opnorm}
\newcommand{\@opnorms}[1]{%
  \left|\mkern-1.5mu\left|\mkern-1.5mu\left|
   #1
  \right|\mkern-1.5mu\right|\mkern-1.5mu\right|
}
\newcommand{\@opnorm}[2][]{%
  \mathopen{#1|\mkern-1.5mu#1|\mkern-1.5mu#1|}
  #2
  \mathclose{#1|\mkern-1.5mu#1|\mkern-1.5mu#1|}
}
\makeatother\theoremstyle{plain}
\newtheorem{theorem}{Theorem}[section]
\newtheorem{lemma}[theorem]{Lemma}
\newtheorem{proposition}[theorem]{Proposition}
\theoremstyle{definition}
\newtheorem{remark}[theorem]{Remark}
\newtheorem{assumption}{Assumption}
\def\supp{\mathop{\mathrm{supp}}\nolimits}
\def\Re{\mathop{\mathrm{Re}}\nolimits}
\def\R{{\mathbb{R}}}
\def\Z{{\mathbb{Z}}}
\def\C{{\mathbb{C}}}
\def\F{{\mathcal{F}}}
\newcommand{\J}[1]{\left\langle #1 \right\rangle}
\def\<{{\langle}}
\def\>{{\rangle}}
\def\ep{{\varepsilon}}
\def\ds{\displaystyle}
\title[Modified scattering for the 1D cubic NLS with long-range potential]{Modified scattering for the cubic nonlinear Schr\"odinger equation with long-range potentials in one space dimension}
\author{Masaki Kawamoto}
\address[M. Kawamoto]{Research Institute for Interdisciplinary Science, Okayama University, 3-1-1, Tsushimanaka, Kita-ku, Okayama City, Okayama, 700-8530, Japan}
\email{kawamoto.masaki@okayama-u.ac.jp}
\author{Haruya Mizutani}
\address[H. Mizutani]{Department of Mathematics, Graduate School of Science, Osaka University, Toyonaka, Osaka 560-0043, Japan}
\email{haruya@math.sci.osaka-u.ac.jp}
\keywords{1D cubic NLS, long-range potential, modified scattering}
\subjclass[2020]{Primary: 35R03; Secondary: 35B40, 35P25}
\begin{document}
%\date{\today}

%
\begin{abstract}We consider the cubic nonlinear Schr\"odinger equation with long-range linear potentials in one space dimension, and prove the modified scattering in the energy space for the associated final state problem with a prescribed small asymptotic profile. Compared with the leading term of the free solution, the asymptotic profile has an additional phase correction depending both on the long-range part of the potential and on the nonlinear term. The proof is based on a simple energy method and does not rely on global-in-time Strichartz estimates for Schr\"odinger equations with linear potentials. In particular, the class of potentials to which our theorem applies is large enough to accommodate slowly decaying negative potentials so that the associated Schr\"odinger operators may have negative eigenvalues. 
\end{abstract}

\maketitle

\section{Introduction}
The present paper is concerned with the scattering problem for the following cubic nonlinear Schr\"odinger equation (NLS) with a linear potential  in one space dimension: 
\begin{align}
\label{NLS_V}
i\partial_t u-H_0u-Vu=F(u),\quad x\in \R,\quad t\in \R,
\end{align}
where $u=u(t,x)$ is a $\C$-valued unknown function, $
F(u)=\lambda |u|^{2}u
$ with $\lambda \in \R$, and $H_0+V$ is the Schr\"odinger operator with a real-valued potential $V$: 
$$
H_0:=-\frac12\frac{d^2}{dx^2},\quad V=V(x):\R\to \R. 
$$
We are interested in the (small data) modified scattering for the associated finial state problem. More precisely, given a prescribed sufficiently small asymptotic profile function $u_{\mathrm p}(t,x)$, we construct a unique global solution $u\in C(\R;H^1(\R))$ to \eqref{NLS_V} which scatters to $u_{\mathrm p}$ as $t\to \infty$:  
$$
\lim_{t\to \infty}\|u(t)-u_{\mathrm p}(t)\|_{H^1(\R)}=0, 
$$
where $H^s(\R)$ denotes the $L^2$-Sobolev space of  order $s$. Here the term ``modified" means that the asymptotic profile $u_{\mathrm p}$ has an additional correction term compared with the leading term of the free solution $e^{-itH_0}u_+$, depending both on $F(u)$ and on the long-range part of $V$ (see \eqref{v_p} for the definition of $u_{\mathrm p}$). This gives the first positive result on the modified scattering for \eqref{NLS_V}, and is a continuation of our resent work \cite{Kawamoto_Mizutani_TAMS} for the two and three space dimensional cases. %The main novelty of the present paper compared with the method in \cite{Kawamoto_Mizutani_TAMS} is to avoid the use of global-in-time Strichartz estimates for the linear propagator $e^{-itH}$, instead the proof is based on a rather simple energy method. This is not only a technical issue, but also crucial to deal with a wider class of potentials than that of \cite{Kawamoto_Mizutani_TAMS}. For instance, our theorem applies to decaying negative  potentials $V$ so that $H$ may have negative eigenvalues, while the potential  in \cite{Kawamoto_Mizutani_TAMS} was assumed to be repulsive (positive and monotonously decreasing in the radial direction) in which case $H$ is purely absolutely continuous. Such a situation is naturally occurred for several physically relevant models, such as the many-body Schr\"odinger equations having a ground state for which \eqref{NLS_V} can be regarded as a reduced model in the context of the mean field approximation. %A typical example of $V$ we have in mind is of the form$V(x)=Z\<x\>^{-\rho}$ with some $1/2<\rho \le1$ and $Z\neq0$ (see Assumption \ref{assumption_A} for the precise condition on $V$). 

To state the result, we first introduce the class of potentials. 

%assumption
\begin{assumption}
\label{assumption_A}
$V$ is decomposed into the short-range, long-range and compactly supported singular parts as $V=V^{\mathrm S}+V^{\mathrm L}+V^{\mathrm{C}}
$ with real-valued functions $V^{\mathrm S}, V^{\mathrm L},V^{\mathrm{C}}$ satisfying following:
\begin{itemize}
\item Short-range part: $V^{\mathrm S}\in C^1(\R)$ and there exists $\rho_{\mathrm S}>3/2$ such that, for any $k=0,1$,  
\begin{align*}
%\label{assumption_A_1}
\left|\frac{d^k}{dx^k} V^{\mathrm S}(x) \right|\lesssim \<x\>^{-\rho_{\mathrm S}} 
\end{align*} 
\item Long-range part: $V^{\mathrm L}\in {C^3} (\R)$ and there exists $\rho_{\mathrm L}>1/2$ such that, for any $k=0,1,2, 3$, 
\begin{align*}
%\label{assumption_A_2}
\left |\frac{d^k}{dx^k}V^{\mathrm L}(x)\right|\lesssim \<x\>^{-\rho_{\mathrm L}-k}.
\end{align*}
\item Compactly supported singular part: $V^{\mathrm{C}}\in L^2(\R)$ and $V^{\mathrm{C}}$ is compactly supported. 
\end{itemize}
\end{assumption}
Under Assumption \ref{assumption_A}, $V$ is infinitesimally $H_0$-bounded, that is, for any $\ep>0$ there exists $C_\ep>0$ such that 
$$
\|Vf\|_{L^2}\le \ep \|H_0 f\|_{L^2}+C_\ep \|f\|_{L^2},\quad f\in H^2(\R).
$$
In particular, $H:=H_0+V$ with domain $D(H)=H^2(\R)$ is self-adjoint on $L^2(\R)$ by the Kato--Rellich theorem, and generates the associated unitary group $e^{-itH}$ on $L^2(\R)$. A typical example of $V$ we have in mind is of the form 
$$
V(x)=Z\<x\>^{-\rho}+V^{\mathrm{C}}(x)
$$ with some $\rho>1/2$ and $Z\neq0$. When $Z<0$, it is well known that $H$ has negative eigenvalues.

We next introduce the asymptotic profile $u_{\mathrm p}$. Let $c_0>0$ and $\chi\in C_0^\infty(\R)$ be such that $0\le \chi \le1$, $\chi(x)=1$ for $|x|\le c_0/4$ and $\chi(x)=0$ for $|x|\ge {c_0/3}$. Define an effective potential 
\begin{align}
\label{V_T_1}
V_{T_1}(t,x)=V^{\mathrm{L}}(x)\left\{1-\chi\left(\frac{2x}{t+T_1}\right)\right\},
\end{align}
where $T_1\ge1$ is a large constant specified in Proposition \ref{proposition_HJ} below. It is easily seen that
\begin{itemize}
\item $V_{T_1}\equiv V^{\mathrm {L}}$ if $t\ge0$ and $|x|\ge c_0(t+T_1)/4$;
\item $V_{T_1}\in C^\infty([0,\infty);C^3(\R))$ and $|\partial_x^k V_{T_1}(t,x)|\le C_\alpha\<t\>^{-\rho_{\mathrm{L}}-k}$ on $[0,\infty)\times \R$. 
\end{itemize}

%proposition
\begin{proposition}
\label{proposition_HJ}
Let $c_0>0$. Then, for sufficiently large $T_1>0$, one can construct a global solution $\Psi\in C^1([1,\infty)\times\R)\cap C([1,\infty); C^3(\R))$ to the Hamilton--Jacobi equation
\begin{align}
\label{HJ}
-\partial_t \Psi(t,x)=\frac12|\partial_x \Psi(t,x)|^2+V_{T_1}(t,x)
\end{align}
such that, for $k=1,2,3$ and $t\ge1$, 
\begin{align}
\label{proposition_HJ_1}
\left\|\partial_x^k\left(\Psi(t,x)-\frac{x^2}{2t}\right)\right\|_{L^\infty(\R)}\lesssim 
\begin{cases}
 \<t\>^{1-\rho_{\mathrm L}- k} & \mbox{if} \ \rho_{\mathrm{L}}  \neq 1, \\ 
\J{t}^{-k}  \log (tT_1^{-1} +1) & \mbox{if} \ \rho_{\mathrm{L}} =1.
 \end{cases}
\end{align}
In particular, for any $\rho_{\mathrm L}'<\rho_{\mathrm L}$, 
$$
\left\|\partial_x^k\left(\Psi(t,x)-\frac{x^2}{2t}\right)\right\|_{L^\infty(\R)}\lesssim  \<t\>^{1-\rho_{\mathrm L}'- k},\quad t\ge1, \quad k=1,2,3. 
$$
\end{proposition}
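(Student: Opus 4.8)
The plan is to construct $\Psi$ by the method of characteristics for the Hamilton--Jacobi equation \eqref{HJ}, writing $\Psi(t,x)=\frac{x^2}{2t}+\phi(t,x)$ so that \eqref{proposition_HJ_1} becomes a statement about the smallness of $\partial_x^k\phi$ for $k=1,2,3$. Substituting this ansatz into \eqref{HJ}, the momentum field $p:=\partial_x\Psi=\frac xt+\partial_x\phi$ is seen to solve the forced Burgers equation $\partial_t p+p\,\partial_x p=-\partial_x V_{T_1}$, whose characteristics are exactly the bicharacteristics of $H(t,x,\xi)=\tfrac12\xi^2+V_{T_1}(t,x)$, namely $\dot X=\Xi$, $\dot\Xi=-\partial_x V_{T_1}(t,X)$. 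The key structural gain from the cutoff in \eqref{V_T_1} is that $V_{T_1}(t,\cdot)$ is supported in $|x|\gtrsim t+T_1$, so in fact $|\partial_x^kV_{T_1}(t,x)|\lesssim (t+T_1)^{-\rho_{\mathrm L}-k}$ on $[0,\infty)\times\R$; thus every time integral below carries an explicit gain in $T_1$, which is what turns the whole scheme into a contraction for $T_1$ large.

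First I would solve the bicharacteristic system on $[1,\infty)$ backward from $t=\infty$, prescribing the asymptotic velocity $v:=\lim_{t\to\infty}\Xi(t)$. Since $\int_1^\infty\|\partial_xV_{T_1}(\tau,\cdot)\|_{L^\infty}\,d\tau\lesssim\int_1^\infty(\tau+T_1)^{-\rho_{\mathrm L}-1}\,d\tau<\infty$ for every $\rho_{\mathrm L}>0$, the velocity converges and $\Xi(t;v)=v+\int_t^\infty\partial_xV_{T_1}(\tau,X(\tau;v))\,d\tau$ with $|\Xi(t;v)-v|\lesssim\J{t}^{-\rho_{\mathrm L}}$; existence and uniqueness of the orbit follow from a contraction argument, the position being normalized at $t=1$ so as to select the field with the sharp decay. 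Two remarks are in order here. Orbits with $|v|$ below a fixed threshold (of order $c_0$) never enter the support of $V_{T_1}$, so there $X(t;v)=vt$, $\Xi\equiv v$ and $\phi\equiv0$; this disposes of the region near $x=0$ and shows the construction glues smoothly. And it is precisely the \emph{from-infinity} normalization of the velocity (available because $\partial_xV_{T_1}$ is integrable in time, unlike $V_{T_1}$ itself, whereas the position shift $X-vt$ genuinely grows like $\J{t}^{1-\rho_{\mathrm L}}$) that upgrades the naive $O(1/t)$ behaviour to the sharp $O(\J{t}^{-\rho_{\mathrm L}})$.

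Next I would recover the field $p$ by inverting the time-$t$ flow map $v\mapsto X(t;v)$ and setting $p(t,x)=\Xi(t;v(t,x))$, where $X(t;v(t,x))=x$. This requires the map to be a global diffeomorphism of $\R$ for every $t\ge1$, and I expect this to be the main obstacle: it is exactly the statement that no caustics (shocks of the Burgers equation) form, and it is false for general potentials. Differentiating the orbit equations, $a:=\partial_vX$ and $b:=\partial_v\Xi$ solve the variational (Hill-type) system $\dot a=b$, $\dot b=-\partial_x^2V_{T_1}(t,X)\,a$ with $b(t)\to1$ as $t\to\infty$; using $|\partial_x^2V_{T_1}(\tau,X)|\lesssim(\tau+T_1)^{-\rho_{\mathrm L}-2}$ together with $|a|\lesssim\tau$ gives $b(t)=1+\int_t^\infty\partial_x^2V_{T_1}(\tau,X)\,a\,d\tau=1+O(T_1^{-\rho_{\mathrm L}})$, whence $a(t)=t\,(1+O(T_1^{-\rho_{\mathrm L}}))>0$ on all of $[1,\infty)$ once $T_1$ is large. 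Here the smallness from large $T_1$ is indispensable, and closing these bounds self-consistently (the estimates on $X$ feed back into the bounds on $\partial_x^kV_{T_1}(\cdot,X)$) is carried out by a bootstrap/fixed-point on $[1,\infty)$.

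Finally I would read off \eqref{proposition_HJ_1}. Writing $w=\partial_x\phi=\Xi-X/t$, the identity $\frac{d}{dt}\big(t\,w\big)=-t\,\partial_xV_{T_1}(t,X)$ holds along each bicharacteristic, so the governing integrand is $\tau\cdot(\tau+T_1)^{-\rho_{\mathrm L}-1}\sim\tau^{-\rho_{\mathrm L}}$; together with the asymptotic-velocity normalization this yields $|t\,w|\lesssim\J{t}^{1-\rho_{\mathrm L}}$ when $\rho_{\mathrm L}\neq1$ and $|t\,w|\lesssim\log(tT_1^{-1}+1)$ when $\rho_{\mathrm L}=1$, which is exactly \eqref{proposition_HJ_1} for $k=1$ and the source of the dichotomy. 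For the higher derivatives one uses $\partial_x^{k}\phi=\partial_x^{k-1}w$ and $\partial_x=a^{-1}\partial_v$ with $a\sim t$: since $\partial_xp=b/a$ and each extra $x$-derivative brings one more factor $1/a\sim\J{t}^{-1}$ (the second variational quantities $\partial_v^2X,\partial_v^2\Xi$ preserving the $\rho_{\mathrm L}$-decay), this produces the gains $\J{t}^{-1-\rho_{\mathrm L}}$ and $\J{t}^{-2-\rho_{\mathrm L}}$ predicted by \eqref{proposition_HJ_1} for $k=2,3$, with the same $\log$ refinement at $\rho_{\mathrm L}=1$. The function $\Psi$ is then recovered as a primitive of the closed one-form $p\,dx-\big(\tfrac12p^2+V_{T_1}\big)\,dt$ on the simply connected domain $[1,\infty)\times\R$ (closedness being equivalent to the Burgers equation for $p$, hence to \eqref{HJ}), and the regularity $\Psi\in C^1([1,\infty)\times\R)\cap C([1,\infty);C^3(\R))$ follows from the $C^3$-dependence of the flow on $v$ and the $C^1$-dependence in $t$ inherited from $V_{T_1}\in C^\infty([0,\infty);C^3(\R))$.
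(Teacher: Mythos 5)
Your construction is essentially the paper's own proof in a slightly different wrapping: the same bicharacteristic system with the momentum prescribed at $t=\infty$, the same smallness mechanism (the cutoff giving $|\partial_x^k V_{T_1}(t,x)|\lesssim (t+T_1)^{-\rho_{\mathrm L}-k}$ and a contraction/bootstrap for large $T_1$), the same inversion of the flow map with $\partial_v X\sim t$, and the same variational-equation estimates producing the extra factors $t^{-1}$ for $k=2,3$. The two deviations are packaging rather than substance. First, you normalize the position at $t=1$ while the paper takes $Z(0,\xi)=0$; both give $X(t;v)=vt+O(\theta(t))$ and are interchangeable. Second, the paper recovers $\Psi$ through the action integral $\varphi$, the generating function $S(t,\xi)=\varphi(t,\eta(t,\xi))$, a second inversion $\widetilde{\Theta}$, and the Legendre-type formula $\Psi=x\Theta-S(t,\Theta)$, whereas you invert the flow map once, set $p(t,x)=\Xi(t;v(t,x))$, and recover $\Psi$ as a primitive of the closed one-form $p\,dx-\bigl(\tfrac12 p^2+V_{T_1}\bigr)\,dt$, closedness being Burgers' equation for $p$, which follows from the characteristic ODEs once the flow map is a diffeomorphism. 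Your route is leaner (one inversion instead of two, no explicit action), at the mild cost of needing $p\in C^1$ before invoking the Poincar\'e lemma; your diffeomorphism estimates supply exactly that.

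One caveat, which you share with the paper's appendix: with the position normalized at a finite time (your $t=1$, the paper's $t=0$), integrating the identity $\frac{d}{dt}(tw)=-t\,\partial_xV_{T_1}(t,X)$ forward gives $|tw(t)|\lesssim T_1^{-\rho_{\mathrm L}}+\int_1^t\tau(\tau+T_1)^{-1-\rho_{\mathrm L}}\,d\tau$, which reproduces the stated bounds only for $\rho_{\mathrm L}\le 1$. For $\rho_{\mathrm L}>1$ this integral converges to a generically nonzero constant, so the argument yields only $|w|\lesssim t^{-1}$ rather than the claimed $t^{-\rho_{\mathrm L}}$; to do better one would have to normalize the asymptotic position, i.e.\ impose $t\,\Xi(t)-X(t)\to 0$, a condition that is available precisely when $\rho_{\mathrm L}>1$ and meaningless when $\rho_{\mathrm L}\le 1$. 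Since the paper's own $\theta(t)$ and $r_{\rho_{\mathrm L},T_1}(t)$ are defined only for $\rho_{\mathrm L}\le 1$, its proof carries the identical restriction, so your proposal is faithful to it; just be aware that the case ``$\rho_{\mathrm L}\neq 1$'' in \eqref{proposition_HJ_1} is, in both arguments, really proved as ``$\rho_{\mathrm L}< 1$''.
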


\begin{proof}
The proposition is basically well-known in the context of long-range linear scattering theory (see e.g. \cite[Sections 1.5, 1.8 and A.3]{DeGe}). Moreover, the proof is essentially same as that of \cite[Proposition 3.1]{Kawamoto_Mizutani_TAMS}. We thus give an outline of the proof in Appendix \ref{appendix_proof_proposition_HJ} below, and refer to \cite[Proposition 3.1]{Kawamoto_Mizutani_TAMS} for more details. 
\end{proof}

%\begin{color}{red}[H: Except for \eqref{proposition_HJ_1} with $k=3$, the proposition was proved by \cite{Kawamoto_Mizutani_TAMS}. \eqref{proposition_HJ_1} for $k=3$ should be proved!] \end{color}
Using the function $\Psi(t,x)$, we define $u_{\mathrm p}$ by
\begin{align}
\label{v_p}
u_{\mathrm p}(t,x)=[\mathcal M_\Psi(t)\mathcal D(t)w_{\mathrm p}](t,x)=(it)^{-1/2}e^{i\Psi(t,x)}e^{-i\lambda|\widehat{u_+}(x/t)|^2\log |t|}\widehat{u_+}(x/t),
\end{align}
where $u_+(x)$ is a given function (often called the scattering datum) and 
\begin{align*}
w_{\mathrm{p}}(t,x)&=e^{-i\lambda  |\widehat{u_+}(x)|^{2}\log |t|}\widehat{u_+}(x),\\
\mathcal D(t)f(x)&=(it)^{-1/2}f(x/t),\\
\mathcal M_\Psi(t)f(x)&=e^{i\Psi(t,x)}f(x),\\
\widehat f(x)=\F f(x)&=\frac{1}{\sqrt{2\pi}}\int_\R e^{-ix\cdot\xi}f(\xi)d\xi. 
\end{align*}
We here recall that the leading term of the free solution $e^{-itH_0}u_+$ satisfies
$$
e^{-itH_0}u_+=\mathcal M(t)\mathcal D(t)\widehat{u_+}+o(1)
$$
in $L^2$ as $t\to \infty$, where $\mathcal M(t)f(x)=e^{i|x|^2/(2t)}f(x)$ and the phase function $\Psi_0=|x|^2/(2t)$ solves the free Hamilton--Jacobi equation $-\partial_t\Psi_0=\frac12|\partial_x\Psi_0|^2$. Thus, compared with this free profile $\mathcal M(t)\mathcal D(t)\widehat{u_+}$,  $u_{\mathrm p}$ has the additional phase correction terms $e^{-i\lambda  |\widehat{u_+}(x/t)|^{2}\log |t|}$ and $e^{i(\Psi-\Psi_0)}$ depending on $F(u)$ and $V^{\mathrm L}$, respectively. 

Let $H^{s,r}(\R)$ be the weighted $L^2$-Sobolev space defined by
\begin{align*}
H^{s,r}(\R)&=\{f\in \mathcal S'(\R)\ |\ \|f\|_{H^{s,r}}<\infty\},\\ \|f\|_{H^{s,r}}&=\|\<\partial_x\>^s\<x\>^rf\|_{L^2(\R)}=\|\<\xi\>^s\F[\<x\>^rf]\|_{L^2(\R)},
\end{align*}
where $\<x\>=\sqrt{1+|x|^2}$. Note that $H^{s}(\R)=H^{s,0}(\R)$. We now state the result.

%theorem
\begin{theorem}
\label{theorem_NLS_V_1}
Suppose that $V$ satisfies Assumption \ref{assumption_A}. Let $c_0>0$ and $T_1$ in Proposition \ref{proposition_HJ} be fixed. Let $b>2$ and  $\delta\le1$ be such that 
$$
\frac12<\delta<\min\{\rho_{\mathrm L},\rho_{\mathrm S}-1\}.
$$
Then, for any $u_+\in H^{1,2\delta}(\R)$ with sufficiently small $\|\widehat{u_+}\|_{L^\infty}$ and $\supp\widehat{u_+}\subset \{|\xi|\ge c_0\}$, there exists a unique solution $u\in C(\R;H^1(\R))$ to \eqref{NLS_V} satisfying the prescribed asymptotic condition:
\begin{align}
\label{theorem_NLS_V_1_1}
\|u(t)-u_{\mathrm p}(t)\|_{H^1(\R)}\lesssim t^{-\delta}(\log t)^b,\quad t\to \infty. 
\end{align}
\end{theorem}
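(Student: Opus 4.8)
The plan is to pass to the difference $v:=u-u_{\mathrm p}$ and solve a final-state (backward-in-time) problem for it. First I would compute the \emph{residual} left by inserting the profile into the equation,
\[
\mathcal E(t):=i\partial_t u_{\mathrm p}-H_0u_{\mathrm p}-Vu_{\mathrm p}-F(u_{\mathrm p}),
\]
so that $v$ solves $i\partial_t v-Hv=\bigl(F(u_{\mathrm p}+v)-F(u_{\mathrm p})\bigr)-\mathcal E$ with $\|v(t)\|_{H^1}\to0$ as $t\to\infty$. Since $H=H_0+V$ is self-adjoint, this is equivalent to the integral equation
\[
v(t)=i\int_t^\infty e^{-i(t-s)H}\Bigl[\bigl(F(u_{\mathrm p}(s)+v(s))-F(u_{\mathrm p}(s))\bigr)-\mathcal E(s)\Bigr]\,ds,
\]
which I would solve by a contraction/continuity (bootstrap) argument in $\{v\in C([T,\infty);H^1):\sup_{t\ge T}t^{\delta}(\log t)^{-b}\|v(t)\|_{H^1}<\infty\}$ for $T$ large, and afterwards extend to a global solution on $\R$ by the $H^1$ local theory for \eqref{NLS_V}.

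The heart of the matter is to show that $u_{\mathrm p}$ is an approximate solution, i.e.\ to bound $\mathcal E$ in $H^1$. Using the factorisation $u_{\mathrm p}=\mathcal M_\Psi(t)\mathcal D(t)w_{\mathrm p}$ from \eqref{v_p} I would let $\partial_t$ and $\partial_x^2$ fall on the dilation $\mathcal D(t)$, on the phase $e^{i\Psi}$, and on $w_{\mathrm p}(t,x/t)$, and then exploit the three cancellations built into $u_{\mathrm p}$ in order: the Hamilton--Jacobi equation \eqref{HJ} removes the term $\bigl(\partial_t\Psi+\tfrac12|\partial_x\Psi|^2\bigr)u_{\mathrm p}$ and leaves $(V_{T_1}-V)u_{\mathrm p}$ plus lower order; on $\supp\widehat{u_+}(\cdot/t)$ one has $|x|\gtrsim t$, where $V_{T_1}=V^{\mathrm L}$ and (for $t$ large) $V^{\mathrm C}$ vanishes, so $V_{T_1}-V=-V^{\mathrm S}$ contributes only $O(\<x\>^{-\rho_{\mathrm S}})=O(t^{-\rho_{\mathrm S}})$; and the logarithmic phase $e^{-i\lambda|\widehat{u_+}(x/t)|^2\log|t|}$ is tuned so that $i\partial_t w_{\mathrm p}$ exactly cancels the resonant self-interaction in $F(u_{\mathrm p})=\lambda t^{-1}|\widehat{u_+}(x/t)|^2u_{\mathrm p}$. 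The surviving remainders are then estimated by \eqref{proposition_HJ_1}: replacing $\partial_x\Psi$ by $x/t$ and $\partial_x^2\Psi$ by $1/t$ costs $\<t\>^{1-\rho_{\mathrm L}'-k}$ for any $\rho_{\mathrm L}'<\rho_{\mathrm L}$, while each $x$-derivative of $w_{\mathrm p}$ brings a factor $\log t$ from the correction phase. Tracking every term (including $\partial_x\mathcal E$) I expect $\|\mathcal E(t)\|_{H^1}\lesssim t^{-1-\mu}(\log t)^{m}$ with $\delta<\mu<\min\{\rho_{\mathrm L}',\rho_{\mathrm S}-1\}$; the hypotheses $\delta<\rho_{\mathrm L}$ and $\delta<\rho_{\mathrm S}-1$ are exactly what make these short- and long-range remainders time-integrable against $t^{-\delta}$.

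To close, I would use a \emph{simple energy method} rather than Strichartz estimates. The point that makes this painless despite the non-smooth part $V^{\mathrm C}$ is to run the energy estimate in the equivalent norm $\|v\|_{H^1}\simeq\|(H+M)^{1/2}v\|$, valid because $V$ is infinitesimally form-bounded and $H$ is bounded below, so that $(H+M)^{1/2}$ commutes with $e^{-itH}$ and one never has to differentiate $V$. Differentiating $\tfrac12\frac{d}{dt}\|(H+M)^{1/2}v\|^2$ and using self-adjointness of $H$ removes the linear propagator from the real part and reduces everything to $\|F(u_{\mathrm p}+v)-F(u_{\mathrm p})\|_{H^1}$ and $\|\mathcal E\|_{H^1}$. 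Expanding the cubic, the dangerous piece is the linear-in-$v$ resonant term $\sim\lambda t^{-1}|\widehat{u_+}(x/t)|^2v$, whose coefficient is borderline non-integrable; its contribution is $C\|\widehat{u_+}\|_{L^\infty}^2\,t^{-1}\|v\|_{H^1}$, so a backward Gronwall argument produces only the harmless factor $(s/t)^{C\|\widehat{u_+}\|_{L^\infty}^2}$ once $\|\widehat{u_+}\|_{L^\infty}$ is small, while the higher-order terms are controlled by the one-dimensional Sobolev embedding $\|v\|_{L^\infty}\lesssim\|v\|_{H^1}$ together with $\|u_{\mathrm p}\|_{L^\infty}\lesssim t^{-1/2}\|\widehat{u_+}\|_{L^\infty}$, which makes them super-integrable. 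Combining this with $\int_t^\infty\|\mathcal E(s)\|_{H^1}\,ds$ yields \eqref{theorem_NLS_V_1_1}, and the same computation applied to the difference of two solutions gives the contraction and hence uniqueness.

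I expect the main obstacle to be the \emph{quantitative} source-term estimate together with the borderline resonance. Two things have to be verified with care: first, that after differentiation and after accounting for the $\log t$ factors generated when $\partial_x$ hits the correction phase, every remainder in $\mathcal E$ still integrates to at most $t^{-\delta}(\log t)^{b}$ --- it is precisely this $\log$ bookkeeping, fed back through the quadratic and cubic terms in the bootstrap, that forces a generous exponent such as $b>2$; and second, that the borderline $t^{-1}$ self-interaction is absorbed by the smallness of $\|\widehat{u_+}\|_{L^\infty}$ in the Gronwall/contraction step, which is what ultimately fixes the admissible range $\tfrac12<\delta<\min\{\rho_{\mathrm L},\rho_{\mathrm S}-1\}$.
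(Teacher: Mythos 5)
Your overall skeleton (set $v=u-u_{\mathrm p}$, solve the backward-in-time integral equation by contraction in $\{\,\sup_t t^{\delta}(\log t)^{-b}\|v\|_{H^1}<\infty\,\}$, absorb the borderline $t^{-1}$ linear term using smallness of $\|\widehat{u_+}\|_{L^\infty}$, use the equivalence $\|f\|_{H^1}\simeq\|\<H\>^{1/2}f\|$ to propagate $H^1$ bounds without Strichartz, then extend backward by the $H^1$ Cauchy theory) is exactly the paper's. But the core technical claim on which your plan rests --- that the full residual $\mathcal E(t)=i\partial_t u_{\mathrm p}-Hu_{\mathrm p}-F(u_{\mathrm p})$ obeys $\|\mathcal E(t)\|_{H^1}\lesssim t^{-1-\mu}(\log t)^m$ with $\mu>\delta$ --- is false, indeed ill-posed, under the hypothesis $u_+\in H^{1,2\delta}$ with $\delta\le 1$. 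The term $H_0u_{\mathrm p}$ produces, after all the cancellations you list (Hamilton--Jacobi, resonant phase, $\partial_x\Psi\approx x/t$), the uncancelled contribution $-\tfrac{1}{2}t^{-2}\mathcal M_\Psi(t)\mathcal D(t)\partial_\xi^2 w_{\mathrm p}$: nothing in $i\partial_t u_{\mathrm p}$, $Vu_{\mathrm p}$ or $F(u_{\mathrm p})$ contains two derivatives of $\widehat{u_+}$ to cancel it. Since $u_+\in H^{1,2\delta}$ gives $\widehat{u_+}$ only $2\delta\le 2$ derivatives, $\mathcal E(t)$ is in general not even a finite $L^2$ function (and the $H^1$ bound would require three derivatives of $\widehat{u_+}$). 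This cannot be repaired by density/approximation, because of the phase $e^{-i\lambda|\widehat{u_+}(x/t)|^2\log t}$: for two nearby data the difference of profiles can grow in $t$, as the paper points out in its Remark on the support condition.

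What is missing is precisely the Hayashi--Naumkin-type device the paper uses so that $i\partial_t-H$ is never applied to $u_{\mathrm p}$ itself. One writes $u_{\mathrm p}=(1-\chi_t)U_\Psi\F^{-1}w_{\mathrm p}-E_1$, where $U_\Psi=\mathcal M_\Psi\mathcal D\F\mathcal M$ carries the extra Dollard factor $\F\mathcal M$ and $E_1=\mathcal M_\Psi\mathcal D(1-\chi)\mathcal R w_{\mathrm p}$ with $\mathcal R=\F(\mathcal M-1)\F^{-1}$. Because $U_\Psi\F^{-1}$ is an (approximate) intertwining propagator, $(i\partial_t-H)$ applied to the first piece costs only \emph{one} derivative of $w_{\mathrm p}$ and yields the integrable errors $E_2$, $E_3$ of Lemmas \ref{lemma_NLS_V_3} and \ref{lemma_NLS_V_4}; the rough remainder $E_1$ is never differentiated at all --- it is kept \emph{outside} the Duhamel integral, $v=E_1+i\int_t^\infty e^{-i(t-s)H}(\cdots)ds$, and one only uses $\|E_1(t)\|_{H^1}\lesssim t^{-\delta}(\log t)^{2}$, which is $o(1)$ but \emph{not} time-integrable. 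This distinction is what makes the theorem reach data with only $2\delta\le2$ weights, and it is also why the final rate in \eqref{theorem_NLS_V_1_1} is $t^{-\delta}(\log t)^{b}$ with $b>2$: the bottleneck is $E_1$, not the source terms inside the integral. Your proposal conflates these two error types into a single ``integrable residual,'' and as stated the scheme would only prove a weaker theorem requiring roughly $u_+\in H^{1,3}$-type regularity. A minor additional point: for the backward extension to all of $\R$ you need local well-posedness with the merely $L^2$, compactly supported part $V^{\mathrm C}$, for which the paper invokes Yajima's local-in-time Strichartz estimates; ``the $H^1$ local theory'' does not come for free here.
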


%remark
\begin{remark}
Since $\|u_{\mathrm p}(t)\|_{L^\infty}\le |t|^{-1/2}\|\widehat{u_+}\|_{L^\infty}$, the solution $u$ also enjoys the same $L^\infty$-decay estimate as for the free solution $e^{-itH_0}u_+$ by the Sobolev embedding: 
$$
\|u(t)\|_{L^\infty(\R)}\lesssim t^{-1/2},\quad t\to \infty. 
$$
\end{remark}

The modified scattering for the standard NLS
\begin{align}
\label{NLS}
i\partial_t u-H_0u=\lambda |u|^{2/d}u,\quad x\in \R^d,\quad t\in \R,\quad \lambda\in \R,\quad d=1,2,3,
\end{align}
 has been extensively studied both for the final state and  Cauchy problems. It is impossible to list all of the known literature, instead we refer to some of important results \cite{Ozawa_1991, Ginibre_Ozawa_1993, Ginibre_Velo_2001, Carles_2001, Hayashi_Naumkin_2006, HaWaNa} for the final state problem and \cite{HaNa1998,Deift_Zhou_2003, LiSo,KaPu,IfTa} for the Cauchy problem. We also refer to \cite{MaMi,MMU,Iflim_Tataru_2023} for more recent development. For the cubic NLS \eqref{NLS_V} satisfying the very short-range condition $\<x\>V\in L^1(\R)$, there also many works on the modified scattering (see e.g. \cite{IPNa,INa,GePuRo,ChePu,ChePu2,Se,MaMuSe}), where the correction term of the asymptotic profile compared with the free solution is independent of the potential. However, if both of the linear potential $V$ and nonlinear term $F(u)$ are of long-range type, then the literature is much more sparse. In fact, although it has been shown in \cite{Murphy_Nakanishi_2011} that no non-trivial solutions scatter to the free solution, there was no previous positive result before \cite{Kawamoto_Mizutani_TAMS} in such a mixed situation. 

In the previous paper \cite{Kawamoto_Mizutani_TAMS}, we established the modified scattering for the following NLS:  
\begin{align}
\label{NLS_V_2}
i\partial_t u-H_0u-Vu=\lambda |u|^{2/d}u,\quad x\in \R^d,\quad t\in \R,\quad d=2,3.
\end{align}
where $V$ is of long-range type and assumed to satisfy not only a similar condition as Assumption \ref{assumption_A}, but also the repulsive condition, which roughly means that $V$ is positive and monotonously decreasing in the radial direction. 
The proof essentially relied on the global-in-time Strichartz estimates for $e^{-itH}$ proved by \cite{Mizutani_JFA,Taira}. Very recently, the argument by \cite{Kawamoto_Mizutani_TAMS} has been applied to the case with a long-range Hartree-type nonlinearity by \cite{Huang}. Although the strategy of \cite{Kawamoto_Mizutani_TAMS} also works well in one space dimension, the validity of global-in-time Strichartz estimates for $e^{-itH}$ with long-range potentials is completely open for $d=1$ even if $V$ is repulsive. Moreover, if $H$ has an eigenvalue then such global-in-time estimates cannot hold, and one should replace $e^{-itH}$ by its absolutely continuous part $e^{-itH}P_{\mathrm{ac}}(H)$ with the projection $P_{\mathrm{ac}}(H)$ onto the absolutely continuous spectral subspace of $H$. However, our previous argument does not work with $e^{-itH}P_{\mathrm{ac}}(H)$ instead of $e^{-itH}$. Moreover, the validity of global-in-time Strichartz estimates for $e^{-itH}P_{\mathrm{ac}}(H)$ with long-range (non-repulsive) potentials is also completely open. 

Compared with the previous result \cite{Kawamoto_Mizutani_TAMS}, the main new feature of the present paper is to avoid the use of global-in-time Strichartz estimates for $e^{-itH}$, instead the proof is based on a rather simple energy method. This is not only a technical issue, but also crucial to deal with a wider class of potentials than that of \cite{Kawamoto_Mizutani_TAMS}. Indeed, as already explained, our theorem applies to slowly decaying negative potentials $V$ so that $H$ may have (infinitely many) negative eigenvalues. Such a situation naturally appears in several important models in mathematical physics, such as the many-body Schr\"odinger equations having a ground state for which \eqref{NLS_V} can be regarded as a reduced model in the framework of the mean field approximation. It is worth mentioning that our result is not contradict with the fact (see e.g. \cite{Mizumachi}) that \eqref{NLS_V} could have ground state solutions in the case when $H$ has negative eigenvalues or the nonlinear term $F(u)$ is focusing (i.e. $\lambda<0$) since Theorem \ref{theorem_NLS_V_1} only provides the existence of modified scattering solutions, and does not exclude the possible existence of such non-decaying solutions.

%remark
\begin{remark}
(1) The assumption $\supp\widehat{u_+}\subset \{|\xi|\ge c_0\}$ with some $c_0>$ is mainly used to prove Proposition \ref{proposition_HJ}. 
We expect that this is a technical condition and should be removed since it is not needed both for the purely linear case $\lambda=0$  and for the purely nonlinear case $V\equiv0$. This is mainly because that, in the linear case,  one can use the density argument and an approximate solution to \eqref{HJ} with $V_{T_1}$ replaced by $V^{\mathrm L}$ to assume without loss of generality that $\widehat{u_+}\in C_0^\infty(\R\setminus\{0\})$. On the other hand, such a reduction is impossible for \eqref{NLS_V} due to the presence of the phase correction $e^{-i\lambda|\widehat{u_+}(x/t)|^2\log |t|}$ in  the formula of $u_{\mathrm p}$ since for two difference scattering data $u_+$ and $v_+$, the difference $e^{-i\lambda|\widehat{u_+}(x/t)|^2\log |t|}-e^{-i\lambda|\widehat{v_+}(x/t)|^2\log |t|}$ may glow as $t\to \infty$. \\
%that the following weaker conditions than \eqref{proposition_HJ_1} are sufficient(see \cite{DeGe}): $$\left\|\partial_x^k\left(\Psi(t,x)-\frac{x^2}{2t}\right)\right\|_{L^\infty_x}=o(t^{1-k}),\quad \<t\>^{-2+k}\left\|\partial_x^k\left(\Psi(t,x)-\frac{x^2}{2t}\right)\right\|_{L^\infty_x}\in L^1(\R).$$We do not know 
(2) It would be interesting whether a similar result as Theorem \ref{theorem_NLS_V_1} also holds for \eqref{NLS_V_2} with $d=2,3$ and non-repulsive potentials. The main obstruction is that one has to work in the fractional Sobolev space $H^s(\R^d)$ with $d/2<s<1+d/2$ due to the low regularity of $F(u)$. In our argument, we use an explicit formula of $\partial_x\mathcal M_{\Psi}(t)\mathcal D(t)$ (see Lemma \ref{lemma_NLS_V_1} below), while it is difficult to obtain explicit formulas of non-local operators $|\nabla_x|^s\mathcal M_{\Psi}(t)\mathcal D(t)$ and $\<\nabla_x\>^s\mathcal M_{\Psi}(t)\mathcal D(t)$. For the standard NLS \eqref{NLS}, as did in \cite{Hayashi_Naumkin_2006}, one can overcome this difficulty to work with  the energy norm $\|\mathcal D(t)^{-1}\mathcal M(t)^{-1}f\|_{H^s}$ instead of $\|f\|_{H^s}$, or equivalently, to deal with $\mathcal D(t)^{-1}\mathcal M(t)^{-1}u$ in $H^s$ instead of $u$ in $H^s$. This is possible since, by virtue of the explicit formula of $e^{-itH_0}$ (see \eqref{Dollard}), one has $\mathcal D(t)^{-1}\mathcal M(t)^{-1}e^{-itH_0}\mathcal M(t)\mathcal D(t)=e^{iH_0/t}$ and $\|e^{iH_0/t}f\|_{H^s}=\|f\|_{H^s}$. It seems to be  however impossible to obtain such nice properties for $\mathcal D(t)^{-1}\mathcal M_{\Psi}(t)^{-1}e^{-itH}\mathcal M_{\Psi}(t)\mathcal D(t)$. 
\end{remark}

\noindent 
\textbf{Organization of the paper.} We prepare some preliminary lemmas in Section \ref{section_2}. The proof of Theorem \ref{theorem_NLS_V_1} is given in Section \ref{section_3}. Appendix \ref{appendix_proof_proposition_HJ} is devoted to the proof of Proposition \ref{proposition_HJ}. 

\section{Preliminary materials}
\label{section_2}
We here prepare several basic facts used in the paper. In what follows we use the notation
$$
\|f\|=\|f\|_{L^2(\R)},\quad \|f\|_s=\|f\|_{H^s(\R)}. 
$$
Let $U_0(t)=e^{-itH_0}$ be the free Schr\"odinger group, which satisfies the Dollard decomposition
\begin{align}
\label{Dollard}
U_0(t)\mathcal F^{-1}=\mathcal M(t)\mathcal D(t)\mathcal F\mathcal M(t)\mathcal F^{-1}=\mathcal M(t)\mathcal D(t)+\mathcal M(t)\mathcal D(t)\mathcal R(t),
\end{align}
where $\mathcal D(t)$ and $\mathcal M(t)$ have been defined in the introduction,  and 
$$
\mathcal R(t)=\F(\mathcal M(t)-1)\F^{-1}. 
$$

%{lemma}
\begin{lemma}
\label{lemma_R}
Let $1\le p\le \infty$. Then
\begin{align}
\label{lemma_D}
\|\mathcal D(t)f\|_{L^p}=|t|^{-1/2+1/p}\|f\|_{L^p},\quad t\neq0.
\end{align}
Moreover, for $0\le \delta\le1$, $s\in \R$ and $t\neq0$, 
\begin{align}
\label{lemma_R_1}
\|\mathcal R(t) f\|_s&\lesssim |t|^{-\delta}\|f\|_{s+2\delta},\\
\label{lemma_R_2}
\| x R(t) f\|&\lesssim |t|^{-\delta}\|xf\|_{2\delta}+|t|^{-1}\|f\|_{1},
\end{align}
where the implicit constants independent of $t$. 
\end{lemma}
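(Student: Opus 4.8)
The plan is to reduce all three estimates to elementary pointwise bounds on the Fourier side, exploiting that $\mathcal R(t)=\F(\mathcal M(t)-1)\F^{-1}$ is, up to the unitary $\F$, multiplication by the bounded symbol $m_t(y):=e^{iy^2/(2t)}-1$. The identity \eqref{lemma_D} is purely a change of variables: writing $\|\mathcal D(t)f\|_{L^p}^p=|t|^{-p/2}\int_\R |f(x/t)|^p\,dx$ and substituting $y=x/t$ gives $|t|^{1-p/2}\|f\|_{L^p}^p$, hence the power $|t|^{-1/2+1/p}$, with the case $p=\infty$ following by taking the limit. For the remaining two estimates the key book-keeping identity is $\<\partial_x\>^s\F=\F\<y\>^s$, obtained by differentiating under the integral sign ($\partial_x\F[h]=\F[-iy\,h]$, whence $(1-\partial_x^2)\F=\F\<y\>^2$ and then raising to the power $s/2$). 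Combined with Plancherel, this turns every Sobolev norm into a weighted $L^2$ norm of the symbol times $\F^{-1}f$.

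For \eqref{lemma_R_1}, the identity above gives $\|\mathcal R(t)f\|_s=\|\<y\>^s m_t\,\F^{-1}f\|$, so the estimate reduces to the pointwise bound $|m_t(y)|\lesssim |t|^{-\delta}\<y\>^{2\delta}$ for $0\le\delta\le1$. This is in turn the interpolation $|e^{i\theta}-1|\le 2^{1-\delta}|\theta|^\delta$, obtained from the two trivial bounds $|e^{i\theta}-1|\le 2$ and $|e^{i\theta}-1|\le|\theta|$, evaluated at $\theta=y^2/(2t)$. Inserting this and using $\|\<y\>^{s+2\delta}\F^{-1}f\|=\|f\|_{s+2\delta}$ (again $\<\partial_x\>^{s+2\delta}\F=\F\<y\>^{s+2\delta}$ together with Plancherel) yields \eqref{lemma_R_1}.

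The estimate \eqref{lemma_R_2} is the technical heart and the step I expect to be the main obstacle, since multiplication by $x$ does not commute with $\F$ and produces an extra derivative. First I would conjugate $x$ through $\F$: integrating by parts in $x\F[h](x)=\tfrac1{\sqrt{2\pi}}\int i(\partial_y e^{-ixy})h\,dy$ gives $x\F[h]=-i\F[h']$, so with $h=m_t\,\F^{-1}f$ and Plancherel one has $\|x\mathcal R(t)f\|=\|(m_t\,\F^{-1}f)'\|$. By the product rule, the factor $m_t'(y)=\tfrac{iy}{t}e^{iy^2/(2t)}$ contributes $\|\tfrac{y}{t}\F^{-1}f\|=|t|^{-1}\|y\F^{-1}f\|$, and since $y\F^{-1}f=i\F^{-1}[f']$ this equals $|t|^{-1}\|f'\|\le |t|^{-1}\|f\|_1$, matching the second term on the right of \eqref{lemma_R_2}. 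The remaining term $\|m_t\,\partial_y\F^{-1}f\|$ is handled exactly as in \eqref{lemma_R_1}: the pointwise bound on $m_t$ gives $\lesssim |t|^{-\delta}\|\<y\>^{2\delta}\partial_y\F^{-1}f\|$, and writing $\partial_y\F^{-1}f=\F^{-1}[i\xi f]$ together with $\<\partial_x\>^{2\delta}\F=\F\<y\>^{2\delta}$ and Plancherel identifies this with $|t|^{-\delta}\|\xi f\|_{2\delta}=|t|^{-\delta}\|xf\|_{2\delta}$, the first term. All the integrations by parts are justified for Schwartz $f$, and the general case follows by a routine density argument, which is the only point requiring a word of care.
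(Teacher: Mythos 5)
Your proposal is correct and follows essentially the same route as the paper: both reduce everything to multiplication by the symbol $e^{iy^2/(2t)}-1$ on the Fourier side, use the pointwise bound $|e^{i\theta}-1|\lesssim|\theta|^\delta$ together with the commutation of $\mathcal R(t)$ with $\<\partial_x\>^s$ for \eqref{lemma_R_1}, and for \eqref{lemma_R_2} your product-rule splitting $(m_t\F^{-1}f)'=m_t'\F^{-1}f+m_t(\F^{-1}f)'$ is exactly the paper's operator identity $x\mathcal R=\mathcal Rx+it^{-1}\F\mathcal M\F^{-1}\partial_x$ written out on the Fourier side, producing the same two terms.
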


\begin{proof}
 \eqref{lemma_D} follows by an elementary calculation. To prove \eqref{lemma_R_1}, it is enough to observe
$$
|e^{i|x|^2/(2t)}-1|=2\left|\sin \frac{x^2}{4t}\right|\lesssim \left|\frac{x^2}{t}\right|^\delta
$$ for $0\le \delta\le1$, and $\mathcal R(t)$ commutes with $\<\partial_x\>^s$. Finally, we have
$$
x\mathcal R(t)=\mathcal F\{(\mathcal M-1)(-i\partial_\xi)+ t^{-1}\mathcal M\xi\}\mathcal F^{-1}=\mathcal Rx+it^{-1}\mathcal F\mathcal M\mathcal F^{-1}\partial_x
$$
and \eqref{lemma_R_2} follows. 
\end{proof}

\begin{lemma}
\label{lemma_F}
For all $z_0,z_1\in \C$, 
$$
F(z_1)=F(z_0)+2\lambda|z_0|^{2}(z_1-z_0)+\lambda z_0^2\overline{(z_1-z_0)}+G(z_1-z_0,z_0), 
$$
where $G(z,z_0)=2\lambda\Re[zz_0]z+\lambda|z|^2z_0+\lambda|z|^2z$. 
\end{lemma}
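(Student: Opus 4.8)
The plan is to regard the identity as the exact Taylor (equivalently, binomial) expansion of the fixed homogeneous cubic $F(w)=\lambda|w|^2w=\lambda w^2\overline w$ about the base point $z_0$. Since $F$ is a polynomial in $w$ and $\overline w$, this expansion terminates after finitely many terms and is an exact identity valid for all $z_0,z_1\in\C$, with no smallness or continuity hypothesis. Concretely, I would set $z:=z_1-z_0$, so that $z_1=z_0+z$, and write $F(z_1)=\lambda(z_0+z)^2\,\overline{(z_0+z)}$.

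First I would expand $(z_0+z)^2=z_0^2+2z_0z+z^2$ together with $\overline{(z_0+z)}=\overline{z_0}+\overline z$, and group the six resulting monomials by their total degree in the pair $(z,\overline z)$. The degree-zero term is $\lambda z_0^2\overline{z_0}=\lambda|z_0|^2z_0=F(z_0)$. The two degree-one terms are $2\lambda z_0\overline{z_0}\,z=2\lambda|z_0|^2z$ and $\lambda z_0^2\,\overline z$; these are exactly the second and third terms of the asserted formula, and they are precisely the Wirtinger first-order part of $F$ at $z_0$, since $\partial_wF(z_0)=2\lambda|z_0|^2$ and $\partial_{\overline w}F(z_0)=\lambda z_0^2$.

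It then remains to verify that the collected higher-order terms, namely $2\lambda z_0z\overline z+\lambda z^2\overline{z_0}+\lambda z^2\overline z$, reproduce $G(z_1-z_0,z_0)=G(z,z_0)$. The only point requiring a little care is rewriting the quadratic cross terms through a real part: using $z\overline z=|z|^2$ and $2\Re[z\overline{z_0}]\,z=(z\overline{z_0}+\overline z\,z_0)z=z^2\overline{z_0}+|z|^2z_0$, one checks that $2\lambda\Re[z\overline{z_0}]\,z+\lambda|z|^2z_0=\lambda z^2\overline{z_0}+2\lambda|z|^2z_0=\lambda z^2\overline{z_0}+2\lambda z_0z\overline z$, while the pure cubic self-term is $\lambda|z|^2z=\lambda z^2\overline z$. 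Summing these gives exactly $2\lambda z_0z\overline z+\lambda z^2\overline{z_0}+\lambda z^2\overline z$, matching $G$ term by term, provided the real part is read as $\Re[z\overline{z_0}]$ so that the conjugations are correctly distributed across the mixed terms.

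There is essentially no analytic obstacle in this lemma; its content is purely algebraic. The main thing to watch is the bookkeeping of complex conjugates — in particular keeping the self-interaction term $\lambda|z|^2z$ distinct from the mixed $z_0$--$z$ terms, and using the correct conjugate inside the real part. Once these are tracked, the identity follows by direct comparison of the two sides; if desired it can be double-checked by writing $z=a+bi$ and $z_0=c+di$ and expanding both sides into real and imaginary parts.
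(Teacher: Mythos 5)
Your proof is correct and takes essentially the same route as the paper, which proves the lemma ``by a direct calculation or applying the Taylor formula'' in the Wirtinger derivatives $\partial_w F$, $\partial_{\overline w}F$ — precisely the expansion and first-order identification you carry out. Your parenthetical caveat is also well taken: the identity is only true with $\Re[z\overline{z_0}]$ in place of the printed $\Re[zz_0]$ (with the literal reading it already fails at $z_0=z=i$, where the printed $G$ gives $0$ but the true remainder is $4\lambda i$), so you have correctly spotted and repaired a typo in the statement rather than introduced a discrepancy.
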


\begin{proof}
The lemma follows by a direct calculations or applying the Taylor formula
$$
f(z_1)=f(z_0)+(z_1-z_0)\int_0^1\partial_zf(z_\theta)d\theta+\overline{(z_1-z_0)}\int_0^1\partial_{\overline z}f(z_\theta)d\theta, 
$$
to $f(z)=\lambda|z|^2z$, where $z_\theta=z_0+\theta(z_1-z_0)$. 
\end{proof}

To estimate the $H^s$-norm of the terms $w_{\mathrm{p}}$ and $F(w_{\mathrm{p}})$  defined in \eqref{v_p}, we use the following

%lemma
\begin{lemma}
\label{lemma_nonlinear}
Let $1/2<s\le3$. Then, for all $t\ge2$, 
\begin{align*}
\|w_{\mathrm{p}}(t)\|_{s}\lesssim \<\log t\>^{\lceil s\rceil}(1+\|\widehat{u_+}\|_{s}^{1+2\lceil s\rceil}),\quad
\|F(w_{\mathrm{p}}(t))\|_{s}\lesssim \<\log t\>^{\lceil s\rceil}(1+\|\widehat{u_+}\|_{s}^{3+2\lceil s\rceil}),
\end{align*}
where $\lceil s\rceil=\min\{m\in \Z\ |\ m\ge s\}$ denotes the smallest integer greater than or equal to $s$. 
\end{lemma}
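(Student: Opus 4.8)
The plan is to view $w_{\mathrm p}(t)$ as a composition $w_{\mathrm p}(t,x)=\Phi_t(\widehat{u_+}(x))$, where $\Phi_t(z)=e^{-i\lambda|z|^2\log t}z$ is a smooth function of $(\Re z,\Im z)$ with $\Phi_t(0)=0$, and to apply a Moser-type composition estimate in $H^s(\R)$. Since $s>1/2$, the space $H^s(\R)$ is a Banach algebra and embeds continuously into $L^\infty(\R)$, so that $\|\widehat{u_+}\|_{L^\infty}\lesssim\|\widehat{u_+}\|_{s}$; I will use both facts repeatedly. The tool I have in mind is the standard composition bound
$$
\|G(f)\|_{s}\lesssim\sum_{k=1}^{\lceil s\rceil}\Big(\sup_{|z|\le\|f\|_{L^\infty}}|G^{(k)}(z)|\Big)\,\|f\|_{L^\infty}^{\,k-1}\,\|f\|_{s},
$$
valid for $1/2<s$ and $G\in C^{\lceil s\rceil}$ with $G(0)=0$ (a standard Moser-type estimate; see, e.g., the monographs of Runst--Sickel or Taylor), where $G^{(k)}$ denotes the collection of all $k$-th order derivatives in $(\Re z,\Im z)$.

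First I would record the pointwise derivative bounds for $\Phi_t$: each differentiation either lands on the prefactor $z$ or brings down a factor $-i\lambda\,\overline z\,\log t$ (resp.\ $-i\lambda\,z\,\log t$) from the exponent, and inductively this yields
$$
\sup_{|z|\le M}|\Phi_t^{(k)}(z)|\lesssim\<\log t\>^{k}\,(1+M^{k+1}),\qquad k=1,\dots,\lceil s\rceil,
$$
on any ball $\{|z|\le M\}$. Taking $M=\|\widehat{u_+}\|_{L^\infty}\lesssim\|\widehat{u_+}\|_{s}$ in the composition estimate, the $k$-th summand is controlled by $\<\log t\>^{k}\|\widehat{u_+}\|_{s}^{k+1}\cdot\|\widehat{u_+}\|_{s}^{k-1}\cdot\|\widehat{u_+}\|_{s}=\<\log t\>^{k}\|\widehat{u_+}\|_{s}^{2k+1}$, and summing over $1\le k\le\lceil s\rceil$ while absorbing the lower powers gives exactly $\|w_{\mathrm p}(t)\|_{s}\lesssim\<\log t\>^{\lceil s\rceil}(1+\|\widehat{u_+}\|_{s}^{1+2\lceil s\rceil})$, the first claimed bound. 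For the integer values $s=1,2,3$ one may alternatively argue directly: Leibniz together with the Fa\`a di Bruno formula for $\partial_x^{m}e^{i\phi}$, with $\phi=-\lambda|\widehat{u_+}|^2\log t$, reduces everything to products of derivatives of $\widehat{u_+}$, and Gagliardo--Nirenberg interpolation $\|\partial_x^j\widehat{u_+}\|_{L^\infty}\lesssim\|\widehat{u_+}\|_{L^\infty}^{1-\theta}\|\widehat{u_+}\|_{m}^{\theta}$ distributes the regularity so as to reproduce the power $2m+1$.

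For the second bound I would first observe that the phase has modulus one, so $|w_{\mathrm p}(t)|=|\widehat{u_+}|$ and hence $F(w_{\mathrm p}(t))=\lambda|w_{\mathrm p}(t)|^2w_{\mathrm p}(t)=\lambda|\widehat{u_+}|^2\,w_{\mathrm p}(t)$. Since $H^s(\R)$ is an algebra for $s>1/2$ (and conjugation preserves the norm), this gives $\|F(w_{\mathrm p}(t))\|_{s}\lesssim\|\widehat{u_+}\|_{s}^{2}\,\|w_{\mathrm p}(t)\|_{s}$, and inserting the bound just obtained for $\|w_{\mathrm p}(t)\|_{s}$ together with the elementary inequality $a^2(1+a^{1+2\lceil s\rceil})\lesssim 1+a^{3+2\lceil s\rceil}$ produces the second estimate.

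I expect the main obstacle to lie in the fractional regime $s\notin\Z$, where the naive integer Leibniz expansion is unavailable: for $s$ close to $1/2$ the derivatives $\partial_x\widehat{u_+}$ need not belong to $L^\infty$, so one cannot simply place all but one factor in $L^\infty$. This is precisely what the fractional Moser/composition estimate circumvents --- through a Littlewood--Paley paraproduct decomposition, or equivalently the Gagliardo seminorm characterization of $H^s$ for the non-integer part combined with the fractional Leibniz (Kato--Ponce) inequality --- and the only delicate point is the bookkeeping that forces the power of $\|\widehat{u_+}\|_{s}$ to come out as exactly $1+2\lceil s\rceil$. The remaining steps are routine.
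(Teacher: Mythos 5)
Your proof is correct, but it takes a genuinely different route from the paper's. The paper splits the range of $s$ into three regimes: for $1/2<s<2$ it simply cites \cite[Lemma 4]{Hayashi_Naumkin_2006}, for $s=2,3$ it computes $\partial_x^s w_{\mathrm p}$ directly, and for $2<s<3$ it hand-crafts a tame estimate for $e^{-i|u|^2}\partial_x^2(e^{i|u|^2}u)$ using the sharp fractional Leibniz inequality of \cite{Grafakos_Oh_2014} together with a product lemma from \cite{Kawamoto_Mizutani_TAMS}; crucially, all of this is done for the $t$-independent expression $e^{i|u|^2}u$, and the factors $\<\log t\>^{\lceil s\rceil}$ and $\|\widehat{u_+}\|_s^{1+2\lceil s\rceil}$ are extracted only at the last line by the substitution $u=|\lambda\log t|^{1/2}\widehat{u_+}$. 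You instead treat all of $1/2<s\le3$ at once by viewing $w_{\mathrm p}=\Phi_t(\widehat{u_+})$ and invoking a Moser-type composition estimate with explicitly tracked constants, so the $\log t$ dependence enters through the pointwise bounds $\sup_{|z|\le M}|\Phi_t^{(k)}(z)|\lesssim\<\log t\>^k(1+M^{k+1})$; moreover, your observation that $F(w_{\mathrm p})=\lambda|\widehat{u_+}|^2w_{\mathrm p}$ (unimodularity of the phase), combined with the algebra property of $H^s$, disposes of the second estimate far more cheaply than the paper, which reruns the whole machinery on $e^{i|u|^2}F(u)$. What each approach buys: yours is uniform in $s$, avoids case-splitting and external citations, and the powers $1+2\lceil s\rceil$ and $3+2\lceil s\rceil$ emerge transparently from the bookkeeping; the paper's scaling trick avoids any need for a composition theorem with a precisely quantified constant. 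That last point is the one place where your write-up needs care: the stated powers of $\log t$ are exactly as strong as the constant $\sum_{k\le\lceil s\rceil}\sup|G^{(k)}|\,\|f\|_{L^\infty}^{k-1}$ that you quote, and a version of the composition estimate requiring $G\in C^{\lceil s\rceil+1}$ (as some paradifferential proofs do) would only give the weaker factor $\<\log t\>^{\lceil s\rceil+1}$. The form you use is indeed valid for $G\in C^{\lceil s\rceil}$ with $G(0)=0$ and $s>1/2$ in one dimension --- it follows from the integer Leibniz expansion plus Kato--Ponce and Gagliardo--Nirenberg interpolation, i.e.\ the same tools the paper deploys in its $2<s<3$ case --- but since the entire lemma hinges on it, you should either include that short derivation or pin down a reference stating the estimate with precisely this constant structure.
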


\begin{proof}
The case $1/2<s<2$ has been proved by \cite[Lemma 4]{Hayashi_Naumkin_2006} (see also \cite[Lemma 2.2]{Kawamoto_Mizutani_TAMS} and its proof). For $s=2,3$, the desired bound can be verified easily by calculating $\partial_x^sw_{\mathrm p}$ and $\partial_x^s F(w_{\mathrm p})$. %\begin{color}{red}[H: For $2<s<3$, we can follow the same argument as in \cite[Appendix B]{Kawamoto_Mizutani_TAMS} (Need to be checked !!)]\end{color}
Suppose $2< s <3$ and define $I := i \bar{u} (\partial_x u)  + i  u \overline{ (\partial _x u)}  $. Then
\begin{align*}
e^{-i |u|^{2} }\partial _x^2 ( e^{i |u|^{2} } u )  = I^2 u + 2 I (\partial_x u) + (\partial_x^2 u)  + (\partial _x I) u .
\end{align*} 
Hence for $0 < s' = s-2< 1$, it is enough to deal with 
\begin{align*}
\| e^{i |u|^{2} } I^2 u \|_{s'} , \quad \| e^{i |u|^{2} } I (\partial _x u)  \| _{s'}, \quad \| e^{i |u|^{2} } (\partial_x I) u \| _{s'}. 
\end{align*} 
By \cite[Lemma B.1]{Kawamoto_Mizutani_TAMS}, the above three terms can be estimated as 
\begin{align*}
(1 + \| u \|_s^{2} )\| I^2 u \|_{s'} , \quad (1 + \| u \|_s^{2} )\|  I (\partial_x u)  \| _{s'}, \quad  (1 + \| u \|_s^{2} )\|  (\partial_x I) u \| _{s'}. 
\end{align*} 
By the sharp fractional Leibniz (see, e.g., Theorem 1 of Grafakos-Oh \cite{Grafakos_Oh_2014}) 
\begin{align*}
\| \<\partial _x\>^{s'}  (fg)  \|_{L^2} \lesssim \|\<\partial _x\>^{s'} f \|_{L^{p_1}} \|  g \|_{L^{{q_1}}} + \|\<\partial _x \>^{s'} g \|_{L^{p_2}} \|  f \|_{L^{{q_2}}}, 
\end{align*} 
where $1/p_j+ 1/q_j =1$ and $1 < p_1,p_2,q_1,q_2 \leq \infty$, and the Sobolev embedding, we have \begin{align*} 
 \left\| I^2 u \right\|_{s'} & \lesssim   \| I^2 \|_{L^{\infty}} \|  u \|_{s'} + \| I^2 \|_{s'} \| u \|_{L^{\infty}}    \lesssim \| I^2 \|_1 \| u \|_1 \lesssim \| u \|_2^5 , \\ 
 \left\| I ( \partial_x u ) \right\|_{s '} & \lesssim  \| I \|_{L^{\infty}} \|  \partial _x u \|_{s'} + \| I \|_{s'} \| \partial _x u \|_{L^{\infty}}   \lesssim \| I \|_1 \| u \|_2 \lesssim \| u \|_2^3  , \\ 
 \left\| (\partial _x I) u  \right\|_{s'} & \lesssim \| (\partial _x I)   \|_{s'} \| u \|_{L^{\infty}} + \| (\partial_x  I)  \|_{} \| \J{\partial _x }^{s'} u \|_{L^{\infty}} 
 \\ & \lesssim  \left( \| (\partial _x^2 u) u \|_{s'} + \| (\partial _x u) ^2 \|_{s'} \right) \| u \|_{L^{\infty}}  +  \| I  \|_1  \| u \|_{1+s'}    
 \\ & \lesssim \left(  \| u \|_s \| u \|_{L^{\infty}} +  \| u \|_2 \| (\partial_x u) \|_{L^{\infty}} + \| u \|_{1+s'} \| (\partial _x u) \|_{L^{\infty}}  \right) \| u \|_1  + \| u \|_2^3\\
 & \lesssim \| u \|_s^3,
\end{align*} 
and hence 
\begin{align*}
\| e^{i |u|^2 } u \|_s \lesssim  (1+\| u \|_{s}^{1+2\lceil s\rceil}), \quad \| e^{i |u|^2 } F(u) \|_s \lesssim  (1+\| u \|_{s}^{3+2\lceil s\rceil}). 
\end{align*}
By taking $u = | \lambda \log t |^{\frac12} \widehat{u_+}   $, we obtain the desired estimates. \end{proof}

%lemma\begin{lemma}\label{log_integral}For $\alpha,\beta>0$ and $t>2$, \begin{align}\nonumber \ds \int_t^\infty s^{-1-\alpha}(\log s)^\beta ds &\lesssim t^{-\alpha}(\log t)^{\beta}.\end{align}\end{lemma}\begin{proof}We prove the second estimate only, the proof of the first one being analogous. It is enough to show the estimate for sufficiently large $t$. Let $I_\beta$ be the LHS of \eqref{K7/27-1}. For sufficiently large $t$, we have $\alpha^{-1}\beta I_{\beta-1}<I_\beta/2$. An integration by parts then yields\begin{align*}I_\beta
%&=\frac{1}{\alpha}t^{-\alpha}(\log t)^{\beta}+\frac{\beta+2}{\alpha}\int_t^\infty s^{-1-\alpha}(\log s)^{\beta+1}ds-\frac{1}{\alpha}t^{-\alpha}(\log t)^{\beta}+\frac{\beta}{\alpha}\int_t^\infty s^{-1-\alpha}(\log s)^{\beta-1}(\log t)^2ds
%=\alpha^{-1}\beta I_{\beta-1}+2\alpha^{-1}\int_t^\infty s^{-1-\alpha}(\log s)^{\beta+1}ds\le I_\beta/2+Ct^{-\alpha}\<\log t\>^{\beta+1}\end{align*}with some absolute constant $C>0$ and the lemma follows.\end{proof}

%%%%%%%%%%%%%%%%%%
\section{The proof of Theorem \ref{theorem_NLS_V_1}}
\label{section_3}
This section is devoted to the proof of Theorem \ref{theorem_NLS_V_1}. 
\subsection{Integral equation}
\label{subsection_integral_NLS_V}
We begin with deriving an appropriate integral equation associated with \eqref{NLS_V} subjected to the asymptotic condition \eqref{theorem_NLS_V_1_1}. To this end, we assume for a while that $u$ is a smooth solution to \eqref{NLS_V}. Recalling the Dollard decomposition $U(t)=\mathcal M(t)\mathcal D(t)\F \mathcal M(t)$ and $M_{\Psi}(t)=e^{i\Psi(t,x)}$, we define the linear modified free propagator associated with \eqref{NLS_V} by
\begin{align}
\label{U_Psi}
U_{\Psi}(t)=\mathcal M_\Psi(t)\mathcal D(t)\F \mathcal M(t).
\end{align}
Let $\chi\in C_0^\infty(\R)$ be  given in the definition of $V_{T_1}$ (see \eqref{V_T_1} above) and $\chi_t(x)=\chi(x/t)$. In what follows, we often omit the variables $t,x$ to write $U_\Psi=U_\Psi(t)$ and so on for short. Since 
$$
U_\Psi\mathcal F^{-1}=\mathcal M_{\Psi}\mathcal D+\mathcal M_{\Psi}\mathcal D\mathcal R,\quad \mathcal R=\mathcal F(\mathcal M-1)\mathcal F^{-1},$$ the asymptotic profile $u_\mathrm p=\mathcal M_\Psi \mathcal Dw_{\mathrm p}$ is decomposed as
\begin{align*}
u_\mathrm p
=(1-\chi_t)\mathcal M_\Psi \mathcal D w_{\mathrm{p}}+\chi_t \mathcal M_\Psi \mathcal D w_{\mathrm{p}}=(1-\chi_t)U_\Psi\mathcal F^{-1}w_{\mathrm{p}}-(1-\chi_t)\mathcal M_\Psi  \mathcal D \mathcal R w_{\mathrm{p}},
\end{align*}
where we have used the support properties $\supp \chi\subset \{|x|\le c_0/2\}$ and $\supp \widehat{u_+}\subset \{|x|\ge c_0\}$ to obtain that $\chi(x) w_{\mathrm p}(t,x)=e^{-i\lambda|\widehat{u_+}(x)|^2\log t}\chi (x)\widehat{u_+}(x)\equiv0$ and hence
$$\chi_t \mathcal M_\Psi(t) \mathcal D(t) w_{\mathrm{p}}(t)=\mathcal M_\Psi(t) \mathcal D(t)\chi w_{\mathrm{p}}(t)\equiv0.$$ Since $w_p$ solves
\begin{align}
\label{w_p}
i\partial_t w_p=t^{-1}F(w_{\mathrm p}),\quad t\neq0,\ x\in \R,
\end{align}
 we have
\begin{align*}
e^{-itH}i\partial_t e^{itH}(1-\chi_t)U_\Psi \mathcal F^{-1}w_{\mathrm{p}}
=t^{-1}(1-\chi_t)U_\Psi \mathcal F^{-1} F(w_{\mathrm{p}})+e^{-itH}[i\partial_t ,e^{itH}(1-\chi_t)U_\Psi \mathcal F^{-1}]w_{\mathrm{p}},
\end{align*}
where $[A,B]=AB-BA$ denotes the commutator. The term $t^{-1}U_\Psi \mathcal F^{-1} F(w_{\mathrm{p}})$ is written as
\begin{align*}
t^{-1}U_\Psi \mathcal F^{-1}F(w_{\mathrm{p}})
%&=t^{-1}\mathcal M_\Psi \mathcal D \mathcal F\mathcal M \mathcal F^{-1}F(w_{\mathrm{p}})\\
&=t^{-1}\mathcal M_\Psi \mathcal D F(\mathcal D^{-1}\mathcal M_\Psi^{-1}v_\mathrm p)+t^{-1}\mathcal M_\Psi \mathcal D\mathcal RF(w_{\mathrm{p}})\\
%&=t^{-1}\mathcal M_\Psi \mathcal D \left(|\mathcal D^{-1}\mathcal M_\Psi^{-1}v_\mathrm p|^{2}\mathcal D^{-1}\mathcal M_\Psi^{-1}u_\mathrm p\right)+t^{-1}\mathcal M_\Psi \mathcal D\mathcal RF(w_{\mathrm{p}})\\
&=F(v_\mathrm p)+t^{-1}\mathcal M_\Psi \mathcal D\mathcal RF(w_{\mathrm{p}}),
\end{align*}
which, together with the fact $t^{-1}\chi_t \mathcal M_\Psi\mathcal DF(w_{\mathrm{p}})=t^{-1}\mathcal M_\Psi\mathcal D\chi F(w_{\mathrm{p}})$ vanishes identically, implies
\begin{align*}
t^{-1}(1-\chi_t)U_\Psi \mathcal F^{-1} F(w_{\mathrm{p}})
&=t^{-1}(1-\chi_t)\mathcal M_\Psi\mathcal DF(w_{\mathrm{p}})+t^{-1}(1-\chi_t)\mathcal M_\Psi\mathcal D\mathcal RF(w_{\mathrm{p}})\\
&=F(u_\mathrm p)+t^{-1}\mathcal M_\Psi \mathcal D(1-\chi)\mathcal RF(w_{\mathrm{p}}). 
\end{align*}
For short, we set $v=u-u_\mathrm p$, and
\begin{align*}
E_1(t)&=\mathcal M_\Psi(t)\mathcal D(t) (1-\chi)\mathcal R(t) w_{\mathrm{p}}(t)\\
E_2(t)&=-t^{-1}\mathcal M_\Psi(t) \mathcal D(t)(1-\chi)\mathcal R(t)F(w_{\mathrm{p}}(t)),\\
E_3(t)&=-e^{-itH}[i\partial_t,e^{itH}(1-\chi_t)U_\Psi(t) \mathcal F^{-1}] w_{\mathrm{p}}(t).
\end{align*}
It follows from the above computations, the NLS \eqref{NLS_V} and Lemma \ref{lemma_F} that
$$v-E_1=u-u_{\mathrm p}-E_1=u-(1-\chi_t)U_\Psi \mathcal F^{-1}w_{\mathrm{p}}$$ 
and that
\begin{align}
\nonumber
(i\partial_t -H)(v-E_1)
&= F(u)-e^{-itH}i\partial_t e^{itH} (1-\chi_t)U_\Psi \mathcal F^{-1}w_{\mathrm{p}}\\
\nonumber
&=F(u)-F(u_\mathrm p)+E_2 +E_3\\
\nonumber
%\label{g_1}
&=2\lambda |u_{\mathrm p}|^2v+\lambda u_{\mathrm p}^2\overline{v}+ G(v,u_{\mathrm p})+E_2+E_3. 
%&=2\lambda |u_{\mathrm p}|^2(\tilde v +E_1)+\lambda u_{\mathrm p}^2\overline{(\tilde v +E_1)}+ G(v ,u_{\mathrm p})+E_2.
%&=W_1(w+e_1)+W_2\overline{(w+e_1)}+ G(u_\mathrm p,u)+e_2.
\end{align}
This equation with the asymptotic condition $\|v\|\to 0$ as $t\to \infty$ leads the integral equation
\begin{align}
\label{integral_equation}
v(t)=E_1(t)+i\int_t^\infty e^{-i(t-s)H}\left(2\lambda |u_{\mathrm p}|^2v+\lambda u_{\mathrm p}^2\overline{v}+ G(v,u_{\mathrm p})+E_2+E_3\right)\!(s)ds. 
\end{align}

%subsection
\subsection{Energy estimates}
Given $\delta,b,R>0,T>2$, we define a complete metric space $X$ by
\begin{align*}
X&=X(\delta,b,T,R):=\{f\in C([T,\infty);H^1(\R))\ |\ \|f\|_X\le R\},\\
\|f\|_X&:=\sup_{t\ge T}t^{\delta}(\log t)^{-b}\|f\|_{1},\quad d_X(f,g)=\|f-g\|_X. 
\end{align*}
We let $\Phi[v](t)$ be the RHS of \eqref{integral_equation} and shall show that $v\mapsto \Phi[v]$ is a contraction on $X$ for sufficiently large $T$. Since $D(H)=H^2(\R)$, $e^{-itH}$ leaves $H^1(\R)$ invariant, satisfying
\begin{align} \label{K8/1-1}
\|e^{-itH}f\|_{1}\lesssim \|\<H\>^{1/2}e^{-itH}f\|=\|\<H\>^{1/2}f\|\lesssim \|f\|_{1}
\end{align}
with implicit constants independent of $t$. Hence
\begin{align}
\label{theorem_NLS_V_1_proof_1}
\|\Phi[v](t)\|_1\lesssim \|E_1(t)\|_1+\int_t^\infty \left(\||u_{\mathrm p}|^2v\|_1+\|u_{\mathrm p}^2\overline{v}\|_1+\|G(v,u_{\mathrm p})\|_1+\|E_2\|_1+\|E_3\|_1\right)(s)ds. 
\end{align}
We collect necessary estimates for each terms of this inequality in the following lemmas: 

%lemma
\begin{lemma}
\label{lemma_NLS_V_1}
For $t>0$, we have
\begin{align*}
\partial_x \mathcal M_{\Psi}(t)&=\mathcal M_\Psi(t)(it^{-1}x+\partial_x)+r_1(t),\\
\partial_x \mathcal M_{\Psi}(t)\mathcal D(t)&=\mathcal M_\Psi(t)\mathcal D(t)(ix+t^{-1}\partial_x)+ {r_1(t) }\mathcal D(t),\\
\partial_xU_{\Psi} (t)\mathcal F^{-1}&=U_{\Psi}(t) \mathcal F^{-1}ix+ {r_2(t) }U_{\Psi} (t)\mathcal F^{-1}. 
\end{align*}
with some multiplication operators $r_j(t)$ by $r_j(t,x)$ satisfying $|r_j(t,x)|\lesssim t^{-\rho_{\mathrm L}'}$. 
\end{lemma}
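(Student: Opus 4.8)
The plan is to prove the three identities successively, reading each as an \emph{exact} operator identity in which the long-range correction $\Psi(t,x)-\tfrac{x^2}{2t}$ is isolated into a single scalar multiplication factor; the quantitative bound $|r_j(t,x)|\lesssim t^{-\rho_{\mathrm L}'}$ will then be nothing but Proposition \ref{proposition_HJ} with $k=1$, which yields $\|\partial_x(\Psi-\tfrac{x^2}{2t})\|_{L^\infty}\lesssim t^{-\rho_{\mathrm L}'}$. No analytic estimate beyond this enters; everything else is algebra of the operators $\mathcal M_\Psi(t)$, $\mathcal D(t)$, $\mathcal M(t)$ and $\mathcal F$.

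For the first identity I would simply differentiate $\mathcal M_\Psi(t)f=e^{i\Psi}f$ by the product rule and split $\partial_x\Psi=\tfrac{x}{t}+\partial_x\bigl(\Psi-\tfrac{x^2}{2t}\bigr)$. The term $\tfrac{x}{t}$ reproduces exactly $\mathcal M_\Psi(t)(it^{-1}x+\partial_x)$, while the remainder is multiplication by
\[
r_1(t,x):=i e^{i\Psi(t,x)}\,\partial_x\Bigl(\Psi(t,x)-\tfrac{x^2}{2t}\Bigr),
\]
a genuine multiplication operator whose modulus is $|\partial_x(\Psi-\tfrac{x^2}{2t})|\lesssim t^{-\rho_{\mathrm L}'}$. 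The second identity is then purely algebraic: from $\mathcal D(t)f(x)=(it)^{-1/2}f(x/t)$ one reads off the scaling relations $\partial_x\mathcal D(t)=t^{-1}\mathcal D(t)\partial_x$ and $x\mathcal D(t)=t\,\mathcal D(t)x$, whence $(it^{-1}x+\partial_x)\mathcal D(t)=\mathcal D(t)(ix+t^{-1}\partial_x)$. Composing the first identity with $\mathcal D(t)$ on the right and pushing $it^{-1}x+\partial_x$ through $\mathcal D(t)$ gives the second identity, with the very same multiplication operator $r_1(t)$ now appearing as $r_1(t)\mathcal D(t)$.

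The third identity is the main step, and I expect it to be the only delicate point. I would apply the second identity to $\mathcal F\mathcal M(t)\mathcal F^{-1}f$, so that what remains is the Dollard-type intertwining
\[
(ix+t^{-1}\partial_x)\,\mathcal F\mathcal M(t)\mathcal F^{-1}=\mathcal F\mathcal M(t)\mathcal F^{-1}\,(ix).
\]
Because $\mathcal F\mathcal M(t)\mathcal F^{-1}$ is translation invariant (a convolution operator), it commutes with $\partial_x$, so it suffices to compute the commutator with $x$. Working on the frequency side and using $x\mathcal F=\mathcal F(-i\partial_\xi)$, $(-i\partial_\xi)\mathcal F^{-1}=\mathcal F^{-1}x$, $\mathcal F\xi=i\partial_x\mathcal F$, together with $[(-i\partial_\xi),e^{i\xi^2/(2t)}]=t^{-1}\xi\,e^{i\xi^2/(2t)}$, one finds $[x,\mathcal F\mathcal M(t)\mathcal F^{-1}]=it^{-1}\partial_x\mathcal F\mathcal M(t)\mathcal F^{-1}$, which after multiplication by $i$ is exactly the displayed intertwining. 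The main obstacle is keeping the Fourier-conjugation bookkeeping consistent here; once it is done, collecting terms gives
\[
\partial_x U_\Psi(t)\mathcal F^{-1}=U_\Psi(t)\mathcal F^{-1}(ix)+r_1(t)\mathcal D(t)\mathcal F\mathcal M(t)\mathcal F^{-1}.
\]
Finally, since $r_1(t)\mathcal D(t)\mathcal F\mathcal M(t)\mathcal F^{-1}$ and $U_\Psi(t)\mathcal F^{-1}=\mathcal M_\Psi(t)\mathcal D(t)\mathcal F\mathcal M(t)\mathcal F^{-1}$ differ only by the scalar factor $e^{i\Psi}$, I would absorb $r_1$ into the multiplication operator $r_2(t):=e^{-i\Psi}r_1(t)$, i.e.\ multiplication by $i\,\partial_x(\Psi-\tfrac{x^2}{2t})$, obtaining $r_1(t)\mathcal D(t)\mathcal F\mathcal M(t)\mathcal F^{-1}=r_2(t)U_\Psi(t)\mathcal F^{-1}$ with the same bound $|r_2(t,x)|\lesssim t^{-\rho_{\mathrm L}'}$, which completes the proof.
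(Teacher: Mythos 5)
Your proof is correct, and its overall structure matches the paper's: isolate the phase discrepancy $\partial_x(\Psi-\tfrac{x^2}{2t})$ into a multiplication operator and bound it by Proposition~\ref{proposition_HJ}. The first two identities are proved exactly as in the paper, with the same remainder $r_1(t)=i\mathcal M_\Psi(t)\bigl(\partial_x\Psi-t^{-1}x\bigr)$ and the same scaling relation $(it^{-1}x+\partial_x)\mathcal D(t)=\mathcal D(t)(ix+t^{-1}\partial_x)$. The only genuine divergence is the third identity. The paper factors $U_\Psi(t)\mathcal F^{-1}=\mathcal M_\Psi(t)\mathcal M(t)^{-1}U_0(t)\mathcal F^{-1}$ (Dollard decomposition \eqref{Dollard} read backwards), differentiates the scalar phase $e^{i(\Psi-x^2/(2t))}$ by the product rule, and invokes the standard free-propagator identity $\partial_x U_0(t)\mathcal F^{-1}=U_0(t)\mathcal F^{-1}(ix)$, which is immediate since $U_0(t)$ commutes with $\partial_x$ and $\partial_x\mathcal F^{-1}=\mathcal F^{-1}(i\xi)$; this yields $r_2(t)=i(\partial_x\Psi-t^{-1}x)$ in one line. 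You instead compose your second identity with the factor $\mathcal F\mathcal M(t)\mathcal F^{-1}$ and verify the intertwining $(ix+t^{-1}\partial_x)\mathcal F\mathcal M(t)\mathcal F^{-1}=\mathcal F\mathcal M(t)\mathcal F^{-1}(ix)$ by a Fourier-side commutator computation; I checked the signs and the computation is right (it is in fact the same calculation the paper performs in the proof of Lemma~\ref{lemma_R}, where $x\mathcal R(t)=\mathcal R(t)x+it^{-1}\mathcal F\mathcal M(t)\mathcal F^{-1}\partial_x$ is derived), and the two intertwinings are equivalent via \eqref{Dollard}. Your final absorption step $r_2(t)=e^{-i\Psi}r_1(t)$ is legitimate because all factors involved are multiplication operators, and it lands on exactly the paper's $r_2(t)=i(\partial_x\Psi-t^{-1}x)$. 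What each route buys: the paper's detour through $U_0$ is shorter, while yours is more self-contained, using only the algebra of $\mathcal F$, $\mathcal M$, $\mathcal D$ without appealing to any property of $e^{-itH_0}$.
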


\begin{proof}
The lemma follows from Proposition \ref{proposition_HJ} and the following three formulas
\begin{align*}
\partial_x \mathcal M_{\Psi}&=\mathcal M_\Psi(it^{-1}x+\partial_x)+i\mathcal M_\Psi(\partial_x\Psi-t^{-1}x),\\
(it^{-1}x+\partial_x)\mathcal D&=\mathcal D(ix+t^{-1}\partial_x),\\
\partial_xU_{\Psi} \mathcal F^{-1}
&=\partial_x \mathcal M_\Psi \mathcal M^{-1}U_0\mathcal F^{-1}\\
&=\mathcal M_\Psi \mathcal M^{-1}\left\{\partial_x+i(\partial_x\Psi-t^{-1}x)\right\}U_0\mathcal F^{-1}\\
&=U_{\Psi} \mathcal F^{-1}ix+i(\partial_x\Psi-t^{-1}x)U_{\Psi} \mathcal F^{-1}. 
\end{align*}
\end{proof}

%lemma
\begin{lemma}
\label{lemma_NLS_V_2}
Let $u_+\in H^{0,1}\cap H^{1,\sigma}$ with some $\sigma>1/2$. Then, for $t\ge2$,  
\begin{align}
\label{lemma_NLS_V_2_1}
\||u_{\mathrm p}(t)|^2v (t)\|_1+\|u_{\mathrm p}(t)^2\overline{v (t)}\|_1&\le C t^{-1}\|\widehat{u_+}\|_{L^\infty}\|v(t) \|_1,\\
\label{lemma_NLS_V_2_2}
\|G(v (t),u_{\mathrm p}(t))\|_1&\le C\left( t^{-1/2}\|v (t)\|_1^2+\|v (t)\|_1^3\right),
\end{align}
with some constant $C=C(\|\<x\>\widehat{u_+}\|_\sigma,\|\widehat{u_+}\|_1)>0$. 
\end{lemma}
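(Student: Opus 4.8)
The plan is to exploit the explicit formula \eqref{v_p}, which gives the pointwise identity $|u_{\mathrm p}(t,x)|=|t|^{-1/2}|\widehat{u_+}(x/t)|$ and hence $\|u_{\mathrm p}(t)\|_{L^\infty}\le t^{-1/2}\|\widehat{u_+}\|_{L^\infty}$. Writing $u_{\mathrm p}=(it)^{-1/2}e^{i\Psi}Q(x/t)$ with $Q(\xi)=e^{-i\lambda|\widehat{u_+}(\xi)|^2\log t}\widehat{u_+}(\xi)$, I would use the decomposition
$$\partial_x u_{\mathrm p}=i(\partial_x\Psi)u_{\mathrm p}+(it)^{-1/2}t^{-1}e^{i\Psi}Q'(x/t),$$
and its analogue for $u_{\mathrm p}^2$. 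The first (\emph{phase}) part carries $\partial_x\Psi$, which by Proposition \ref{proposition_HJ} equals $x/t+O(t^{-\rho_{\mathrm L}'})$ in $L^\infty$ and is therefore unbounded; the second (\emph{profile}) part carries $Q'$, which involves $\widehat{u_+}'$ and a harmless $\log t$. The key point is that the hypotheses give two complementary controls: from $u_+\in H^{1,\sigma}$ with $\sigma>1/2$ one has $\<x\>\widehat{u_+}\in H^\sigma\hookrightarrow L^\infty$, so $\|\<\cdot\>\widehat{u_+}\|_{L^\infty}\lesssim\|\<x\>\widehat{u_+}\|_\sigma$, while from $u_+\in H^{0,1}$ one has $\widehat{u_+}\in H^1$, so $\|\widehat{u_+}\|_{L^\infty}+\|\widehat{u_+}'\|_{L^2}\lesssim\|\widehat{u_+}\|_1$. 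In particular $\|(x/t)u_{\mathrm p}\|_{L^\infty}=t^{-1/2}\sup_y|y\widehat{u_+}(y)|\lesssim t^{-1/2}\|\<x\>\widehat{u_+}\|_\sigma$, so that despite the growth of $\partial_x\Psi$ the product $(\partial_x\Psi)u_{\mathrm p}$ is $O(t^{-1/2})$ in $L^\infty$.

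For \eqref{lemma_NLS_V_2_1} I would estimate $\||u_{\mathrm p}|^2v\|_1$ and $\|u_{\mathrm p}^2\overline v\|_1$ by the Leibniz rule, extracting exactly one factor $\|\widehat{u_+}\|_{L^\infty}$ and absorbing the remainder into $C$. For the modulus square the phase cancels, giving $\partial_x|u_{\mathrm p}|^2=t^{-2}(|\widehat{u_+}|^2)'(x/t)$, whose $L^2$ norm (after the scaling $\|f(\cdot/t)\|_{L^2}=t^{1/2}\|f\|_{L^2}$ from \eqref{lemma_D}) is $O(t^{-3/2})$; pairing this with $\|v\|_{L^\infty}\lesssim\|v\|_1$ together with the zeroth-order term $\|u_{\mathrm p}\|_{L^\infty}^2\|v\|_1$ yields the claim. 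For $u_{\mathrm p}^2$ the phase survives, so I would split $\partial_x(u_{\mathrm p}^2)$ into its phase part $2i(\partial_x\Psi)u_{\mathrm p}^2$ and its profile part; the phase part is placed in $L^\infty$, using $\|(x/t)u_{\mathrm p}^2\|_{L^\infty}\lesssim t^{-1}\|\widehat{u_+}\|_{L^\infty}\|\<x\>\widehat{u_+}\|_\sigma$, against $v$ in $L^2$, whereas the profile part, involving only $\widehat{u_+}'\in L^2$, is placed in $L^2$ against $v$ in $L^\infty$; both pairings produce the factor $t^{-1}\|\widehat{u_+}\|_{L^\infty}$ up to harmless powers $t^{-1/2}\log t\le C$.

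The bound \eqref{lemma_NLS_V_2_2} for $G(v,u_{\mathrm p})=2\lambda\Re[vu_{\mathrm p}]v+\lambda|v|^2u_{\mathrm p}+\lambda|v|^2v$ follows the same scheme. The purely cubic term $|v|^2v$ is controlled by $\|v\|_1^3$ using that $H^1(\R)$ is a Banach algebra (equivalently $H^1\hookrightarrow L^\infty$ in one dimension), with no $t$-decay and no $u_{\mathrm p}$. The two terms linear in $u_{\mathrm p}$ are quadratic in $v$ and each produce one factor $\|u_{\mathrm p}\|_{L^\infty}\le t^{-1/2}\|\widehat{u_+}\|_{L^\infty}$, hence the gain $t^{-1/2}$; when the derivative falls on $u_{\mathrm p}$ one again splits into the phase part, bounded in $L^\infty$ by $\|(\partial_x\Psi)u_{\mathrm p}\|_{L^\infty}\lesssim t^{-1/2}$, and the profile part, bounded in $L^2$, placing the remaining two $v$-factors in $L^2$ and $L^\infty$ respectively.

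I expect the main obstacle to be the unbounded phase gradient $\partial_x\Psi\sim x/t$, which appears whenever a derivative hits $u_{\mathrm p}$ in a term where the phases do not cancel (notably $u_{\mathrm p}^2$). The resolution is the weighted $L^\infty$ bound on $(x/t)u_{\mathrm p}$ furnished by the hypothesis $\<x\>\widehat{u_+}\in L^\infty$. The accompanying bookkeeping --- deciding, term by term, which factor is measured in $L^\infty$ (the phase-times-profile combination, which lies in $L^\infty$) and which in $L^2$ (the profile derivative $\widehat{u_+}'$, which lies only in $L^2$) --- is the only genuinely delicate point, the rest being routine applications of Hölder's inequality, the Sobolev embedding, and the scaling identity \eqref{lemma_D}.
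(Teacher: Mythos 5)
Your proposal is correct and follows essentially the same route as the paper: your pointwise splitting $\partial_x u_{\mathrm p}=i(\partial_x\Psi)u_{\mathrm p}+(it)^{-1/2}t^{-1}e^{i\Psi}Q'(x/t)$ with $\partial_x\Psi=x/t+O(t^{-\rho_{\mathrm L}'})$ is precisely the operator identity of Lemma \ref{lemma_NLS_V_1} ($\partial_x\mathcal M_\Psi\mathcal D=\mathcal M_\Psi\mathcal D(ix+t^{-1}\partial_x)+r_1\mathcal D$) written out explicitly, and the subsequent H\"older pairings --- weighted $L^\infty$ control of $x\widehat{u_+}$ via $H^\sigma\hookrightarrow L^\infty$, the profile derivative $\partial_x w_{\mathrm p}$ in $L^2$ with its harmless $\log t$, and the $H^1$-algebra treatment of $G$ --- match the paper's proof term by term.
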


%proof
\begin{proof}
For the first estimate \eqref{lemma_NLS_V_2_1}, it is enough to deal with $u_{\mathrm p}^2\overline{v }$, the proof for $|u_{\mathrm p}|^2v $ being even simpler. 
Recall that $u_{\mathrm p}=\mathcal M_{\Psi}\mathcal D w_{\mathrm p}$. 
Since $\|u_{\mathrm p}\|_{L^\infty}=\|\mathcal D(t)w_{\mathrm p}\|_{L^\infty}\le t^{-1/2}\|\widehat{u_+}\|_{L^\infty}$, we have
$$
\|u_{\mathrm p}^2\overline{v }\|\le \|u_{\mathrm p}\|_{L^\infty}^2\|v \|\lesssim t^{-1}\|\widehat{u_+}\|_{L^\infty}^2\|v \|.
$$
{We next calculate by using the  previous lemma that
$$
\partial_x (u_{\mathrm p}^2\overline{v })
=2u_{\mathrm p}(\partial_xu_{\mathrm p})\overline{v }+u_{\mathrm p}^2\overline{\partial_x v }=2u_{\mathrm p}\overline{v }\left\{\mathcal M_{\Psi}\mathcal D(ix+t^{-1}\partial_x)w_{\mathrm p}+r_1\mathcal D w_{\mathrm p}\right\}+u_{\mathrm p}^2\overline{\partial_x v },
$$
where $\partial_xw_{\mathrm p}$ is of the form
$$
\partial_xw_{\mathrm p}=-2i\lambda (\log t)\Re(\widehat u_+\partial_x\widehat{u_+})w_{\mathrm p}+e^{-i\lambda(\log t)|\widehat{u_+}|^2}\partial_x\widehat{u_+}
$$
from which we know
\begin{align*}
\|\mathcal D\partial_xw_{\mathrm p}\|=\|\partial_xw_{\mathrm p}\|&\lesssim \log t(1+\|\widehat{u_+}\|_{L^\infty}^2)\|\partial_x\widehat{u_+}\|. 
\end{align*}
Therefore
\begin{align*}
\|\partial_x (u_{\mathrm p}^2\overline{v })\|
&\lesssim \|u_{\mathrm p}\|_{L^\infty}\|v \|\|\mathcal Dxw_{\mathrm p}\|_{L^\infty}+t^{-1}\|u_{\mathrm p}\|_{L^\infty}\|v \|_{L^\infty}\|\mathcal D\partial_xw_{\mathrm p}\|\\
&\quad+t^{-\rho_{\mathrm L}'}\|u_{\mathrm p}\|_{L^\infty}\|v \|\|\mathcal Dw_{\mathrm p}\|_{L^\infty}+\|u_{\mathrm p}\|_{L^\infty}^2\|v \|_1\\
&\lesssim t^{-3/2}\|\widehat{u_+}\|_{L^\infty}\|x\widehat{u_+}\|_{L^\infty}\|v \|+t^{-3/2}\log t\|\widehat{u_+}\|_{L^\infty}(1+\|\widehat{u_+}\|_{L^\infty}^2)\|\partial_x \widehat{u_+}\| \|v \|_1\\
&\quad +t^{-1-\rho_{\mathrm L}'}\|\widehat{u_+}\|_{L^\infty}^2\|v \|+t^{-1}\|\widehat{u_+}\|_{L^\infty}^2\|v \|_1\\
%&\lesssim \|u_{\mathrm p}\|_{L^\infty}\left(\|\mathcal Dxw_{\mathrm p}\|_{L^\infty}\|v \|+t^{-1}\|\mathcal D\partial_xw_{\mathrm p}\|\|v \|_{L^\infty}+t^{-\rho_{\mathrm L}'}\|\mathcal Dw_{\mathrm p}\|_{L^\infty} \|v \|+\|u_{\mathrm p}\|_{L^\infty}\|v \|_1\right)\\
%&\lesssim t^{-1/2}\|\widehat{u_+}\|_{L^\infty}\left(t^{-1/2}\|x\widehat{u_+}\|_{L^\infty}+t^{-1}\log t(1+\|\widehat{u_+}\|_{L^\infty}^2)\|\partial_x\widehat{u_+}\|+t^{-1/2}\|\widehat{u_+}\|_{L^\infty}\right)\|v \|_1\\
&\le C t^{-1}\|\widehat{u_+}\|_{L^\infty}\|v \|_1
\end{align*}
with some $C=C(\|\<x\>\widehat{u_+}\|_\sigma,\|\widehat{u_+}\|_1)>0$, and \eqref{lemma_NLS_V_2_1} follows. For \eqref{lemma_NLS_V_2_2},  recalling that $$G(v ,u_{\mathrm p})=2\lambda\Re[v u_{\mathrm p}]v +\lambda|v |^2u_{\mathrm p}+\lambda|v |^2v $$
we similarly obtain
\begin{align*}
\|G(v ,u_{\mathrm p})\|&\le 3|\lambda| \|u_{\mathrm p}\|_{L^\infty}\|v \|_{L^\infty}\|v \|+|\lambda|\|v \|_{L^\infty}^2\|v \|\lesssim t^{-1/2}\|\widehat{u_+}\|_{L^\infty} \|v \|_1^2+\|v \|_1^3.
\end{align*}
Since 
$$
\partial_x (u_{\mathrm p}v ^2)=v ^2\left\{\mathcal M_\Psi\mathcal D(ix+t^{-1}\partial_x)w_{\mathrm p}+r_1\mathcal Dw_{\mathrm p}\right\}+2u_{\mathrm p}v \partial_x v, 
$$
a similar argument as above also shows
\begin{align*}
\|\partial_x (u_{\mathrm p}v ^2)
&\|\lesssim \|v \|_{L^\infty}\|v \|\|\mathcal Dxw_{\mathrm p}\|_{L^\infty}+t^{-1}\|v \|_{L^\infty}^2\|\mathcal D\partial_xw_{\mathrm p}\|+t^{-\rho_{\mathrm L}'}\|v \|_{L^\infty}\|v \|\|\mathcal Dw_{\mathrm p}\|_{L^\infty}\\
&\quad +\|u_{\mathrm p}\|_{L^\infty}\|v \|_{L^\infty}\|\partial_xv \|\\
&\le C t^{-1/2}\|v \|_1^2
\end{align*}
and similarly 
$$
\|\partial_x (u_{\mathrm p}|v |^2)\|\le C t^{-1/2}\|v \|_1^2
$$
with some $C=C(\|\<x\>\widehat{u_+}\|_\sigma,\|\widehat{u_+}\|_1)>0$.  Hence
\begin{align*}
\|\partial_x G(v ,u_{\mathrm p})\|
&\lesssim  t^{-1/2}\|v \|_1^2+\|v \|_1^3
\end{align*}
and \eqref{lemma_NLS_V_2_2} follows.}
\end{proof}

%lemma
\begin{lemma}
\label{lemma_NLS_V_3}
Suppose $0\le \delta\le1$ and $u_+\in H^{1,2\delta}\cap H^{0,1}$. Then, for $t>2$, 
\begin{align*}
\|E_1(t)\|_1+t\| E_2(t)\|_1&\lesssim t^{-\min\{\delta,\rho_{\mathrm L}'\}}(\log t)^{2}.
\end{align*}
\end{lemma}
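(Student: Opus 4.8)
The plan is to treat $E_1$ and $tE_2$ simultaneously, exploiting that both have the common form $\mathcal M_\Psi(t)\mathcal D(t)(1-\chi)\mathcal R(t)g$ with $g=w_{\mathrm p}$ for $E_1$ and $g=-F(w_{\mathrm p})$ for $tE_2$, and to extract decay from the vanishing factor $\mathcal R(t)$ via Lemma \ref{lemma_R} while absorbing the growth of $w_{\mathrm p}$ and $F(w_{\mathrm p})$ through the logarithmic bounds of Lemma \ref{lemma_nonlinear}. For the $L^2$ part, since $\mathcal M_\Psi(t)$ is a unimodular multiplication operator and $\mathcal D(t)$ is an $L^2$-isometry by \eqref{lemma_D}, one has $\|E_1\|\le\|\mathcal R(t)w_{\mathrm p}\|$ and $t\|E_2\|\le\|\mathcal R(t)F(w_{\mathrm p})\|$; applying \eqref{lemma_R_1} with $s=0$ gives $\|\mathcal R(t)g\|\lesssim t^{-\delta}\|g\|_{2\delta}$, and since $2\delta\le 2$ Lemma \ref{lemma_nonlinear} bounds $\|w_{\mathrm p}\|_{2\delta}$ and $\|F(w_{\mathrm p})\|_{2\delta}$ by $\langle\log t\rangle^{\lceil 2\delta\rceil}\lesssim(\log t)^2$ (for $2\delta\le1/2$ one first uses $\|g\|_{2\delta}\le\|g\|_1$). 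This already produces the $L^2$ contribution $\lesssim t^{-\delta}(\log t)^2$.

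For the $\dot H^1$ part I would differentiate and move $\partial_x$ across $\mathcal M_\Psi(t)\mathcal D(t)$ by the second formula of Lemma \ref{lemma_NLS_V_1}, namely $\partial_x\mathcal M_\Psi\mathcal D=\mathcal M_\Psi\mathcal D(ix+t^{-1}\partial_x)+r_1\mathcal D$ with $|r_1(t,x)|\lesssim t^{-\rho_{\mathrm L}'}$. The remainder term contributes $\|r_1\mathcal D(1-\chi)\mathcal R g\|\lesssim t^{-\rho_{\mathrm L}'}\|\mathcal R g\|\lesssim t^{-\rho_{\mathrm L}'-\delta}(\log t)^2$, which is where the $\rho_{\mathrm L}'$ appearing in the statement originates. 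In the principal term, after discarding the $L^2$-isometry $\mathcal M_\Psi\mathcal D$, one estimates $\|(ix+t^{-1}\partial_x)(1-\chi)\mathcal R g\|$ by splitting into the weight and the derivative: using $|x(1-\chi)|\le|x|$ together with \eqref{lemma_R_2} gives $\|x\mathcal R g\|\lesssim t^{-\delta}\|xg\|_{2\delta}+t^{-1}\|g\|_1$, while the commutation of $\mathcal R$ with $\langle\partial_x\rangle^s$ gives $t^{-1}\|\partial_x(1-\chi)\mathcal R g\|\lesssim t^{-1}(\|\mathcal R g\|+\|\mathcal R g\|_1)\lesssim t^{-1-\delta}\|g\|_{1+2\delta}$, which by Lemma \ref{lemma_nonlinear} (with $1+2\delta\le3$) is $\lesssim t^{-1-\delta}(\log t)^3\lesssim t^{-\delta}(\log t)^2$ since $t^{-1}\log t$ is bounded for $t\ge2$. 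Summing all contributions and using $t^{-\delta}\le t^{-\min\{\delta,\rho_{\mathrm L}'\}}$ closes the estimate for both $E_1$ and $E_2$.

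The step I expect to cost the most work is the control of the weighted fractional norms $\|xw_{\mathrm p}\|_{2\delta}$ and $\|xF(w_{\mathrm p})\|_{2\delta}$, which are not covered directly by Lemma \ref{lemma_nonlinear}. Writing $xw_{\mathrm p}=e^{-i\lambda|\widehat{u_+}|^2\log t}(x\widehat{u_+})$ and $xF(w_{\mathrm p})=\lambda|\widehat{u_+}|^2e^{-i\lambda|\widehat{u_+}|^2\log t}(x\widehat{u_+})$, one sees that each of the up to $\lceil 2\delta\rceil$ derivatives falling on the phase $e^{-i\lambda|\widehat{u_+}|^2\log t}$ yields one factor $\log t$, so a sharp fractional Leibniz estimate of the type used in the proof of Lemma \ref{lemma_nonlinear} gives $\|xw_{\mathrm p}\|_{2\delta}+\|xF(w_{\mathrm p})\|_{2\delta}\lesssim(\log t)^2$ times finite powers of $\|x\widehat{u_+}\|_{2\delta}$, $\|\widehat{u_+}\|_{2\delta}$, $\|\widehat{u_+}\|_1$ and $\|\widehat{u_+}\|_{L^\infty}$. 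The crucial observation is that these norms are finite precisely under the hypothesis $u_+\in H^{1,2\delta}\cap H^{0,1}$: by Plancherel $H^{1,2\delta}$ supplies $\|x\widehat{u_+}\|_{2\delta}$ and $\|\widehat{u_+}\|_{2\delta}$, while $H^{0,1}$ supplies $\|\widehat{u_+}\|_1$, so the weight and regularity bookkeeping closes exactly. Granting this weighted refinement of Lemma \ref{lemma_nonlinear}, the remaining manipulations are routine combinations of Lemmas \ref{lemma_R}, \ref{lemma_nonlinear} and \ref{lemma_NLS_V_1}.
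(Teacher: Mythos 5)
Your overall strategy is the same as the paper's: unitarity of $\mathcal M_\Psi(t)\mathcal D(t)$ plus \eqref{lemma_R_1} and Lemma \ref{lemma_nonlinear} for the $L^2$ part, and, for the derivative, the commutation formula of Lemma \ref{lemma_NLS_V_1} together with \eqref{lemma_R_2} and a weighted variant of Lemma \ref{lemma_nonlinear}. The last point, which you single out as the main extra work, is indeed exactly what the paper leaves implicit in its constant $C(\|\<x\>\widehat{u_+}\|_{2\delta},\|\widehat{u_+}\|_1)$, and your identities $x w_{\mathrm p}=e^{-i\lambda|\widehat{u_+}|^2\log t}(x\widehat{u_+})$ and $xF(w_{\mathrm p})=\lambda|\widehat{u_+}|^2e^{-i\lambda|\widehat{u_+}|^2\log t}(x\widehat{u_+})$ are correct, as is your Plancherel bookkeeping of which norms the hypotheses supply.

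However, one step fails under the stated hypotheses. For the term $t^{-1}\|\partial_x(1-\chi)\mathcal R g\|$ you apply \eqref{lemma_R_1} with exponent $\delta$ at regularity $s=1$, producing $t^{-1-\delta}\|g\|_{1+2\delta}$, and then invoke Lemma \ref{lemma_nonlinear} at regularity $1+2\delta$. That lemma controls $\|w_{\mathrm p}\|_{1+2\delta}$ only in terms of $\|\widehat{u_+}\|_{1+2\delta}$, and this norm need not be finite under the assumption $u_+\in H^{1,2\delta}\cap H^{0,1}$: by Plancherel these hypotheses give $\widehat{u_+}\in H^1$ and $\<x\>\widehat{u_+}\in H^{2\delta}$, hence at most $H^{\max\{1,2\delta\}}$ regularity of $\widehat{u_+}$; for instance, when $\delta=1$ your bound requires $\widehat{u_+}\in H^3$ while the hypotheses give only $H^2$. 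So that chain of inequalities ends in a possibly infinite right-hand side, and this norm is also missing from the list of norms you verify to be finite in your last paragraph. The repair is immediate, and is what the paper does: in this term there is no need to extract any decay from $\mathcal R$, since the prefactor $t^{-1}$ already beats $t^{-\delta}$ (recall $\delta\le1$). Using \eqref{lemma_R_1} with exponent $0$ gives $t^{-1}\|\mathcal R g\|_1\lesssim t^{-1}\|g\|_1\lesssim t^{-1}\log t\lesssim t^{-\delta}(\log t)^2$ for $t\ge2$, which needs only $\|\widehat{u_+}\|_1$. With this one change your argument closes and coincides with the paper's proof.
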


\begin{proof}
By Lemmas \ref{lemma_R} and \ref{lemma_nonlinear} and the unitarity of $\mathcal M_\Psi(t)\mathcal D(t)$, 
\begin{align*}
%\label{lemma_NLS_V_3_proof_1}
\|E_1\|+t\|E_2\|\le C(\|\widehat{u_+}\|_{2\delta})t^{-\delta}(\log t)^{2}. 
\end{align*}
It follows from Lemmas \ref{lemma_R},  \ref{lemma_nonlinear} and \ref{lemma_NLS_V_1} that
\begin{align*}
\|\partial_xE_1\|&\lesssim \|(ix+t^{-1}\partial_x)(1-\chi)\mathcal R w_{\mathrm p}\|+t^{-\rho_{\mathrm L}'}\|\mathcal R w_{\mathrm p}\|\\
&\lesssim \|x\mathcal R w_{\mathrm p}\|+t^{-1}\|\mathcal R w_{\mathrm p}\|_1+(t^{-\rho_{\mathrm L}'}+t^{-1})\|\mathcal Rw_{\mathrm p}\|\\
&\lesssim t^{-\delta}\|xw_{\mathrm p}\|_{2\delta}+t^{-1}\|w_{\mathrm p}\|_1+t^{-\rho_{\mathrm L}'}\|w_{\mathrm p}\|\\
&\le C t^{-\min\{\delta,\rho_{\mathrm L}'\}}(\log t)^{2}
\end{align*}
with some $C=C(\|\<x\>\widehat{u_+}\|_{2\delta},\|\widehat{u_+}\|_1)>0$. Similarly,
\begin{align*}
t\|\partial_x E_2(t)\|
&\lesssim \|(ix+t^{-1}\partial_x)(1-\chi)\mathcal R F(w_{\mathrm p})\|+t^{-\rho_{\mathrm L}'}\|\mathcal R F(w_{\mathrm p})\|\\
&\le Ct^{-\min\{\delta,\rho_{\mathrm L}'\}}(\log t)^{2}
\end{align*}
and the desired bound follows.  
\end{proof}

%lemma
\begin{lemma}
\label{lemma_NLS_V_4}
 There exists $T_2$ depending on $ V^{\mathrm C}$ such that, for any $u_+\in H^{1,1}(\R)$, 
  $$
\|E_3(t)\|_1\lesssim t^{-\min\{1+\rho_{\mathrm L}',\rho_{\mathrm S}\}}\log t,\quad t>T_2. 
$$
\end{lemma}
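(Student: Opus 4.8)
We must bound $\|E_3(t)\|_1$, where
$$
E_3(t)=-e^{-itH}[i\partial_t,e^{itH}(1-\chi_t)U_\Psi(t)\F^{-1}]w_{\mathrm p}(t).
$$
The key structural point is that the phase $\Psi$ was constructed (Proposition \ref{proposition_HJ}) precisely to solve the Hamilton--Jacobi equation \eqref{HJ} with the effective potential $V_{T_1}$, so that $U_\Psi$ is an approximate solution to the free-modified equation. Thus the plan is to expand the commutator explicitly and check that every surviving term is genuinely a lower-order error, decaying like $t^{-\min\{1+\rho_{\mathrm L}',\rho_{\mathrm S}\}}\log t$.

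**Approach.** First I would compute the commutator by hand. Writing $A_\Psi=e^{itH}(1-\chi_t)U_\Psi\F^{-1}$, the operator $[i\partial_t,A_\Psi]$ falls into three groups after differentiating: (i) the time derivative hitting $e^{itH}$, which produces $-H e^{itH}$ and must be combined with the action of $U_\Psi\F^{-1}$; (ii) the time derivative hitting the cutoff $\chi_t=\chi(x/t)$, giving a factor $t^{-1}x(\partial\chi)(x/t)$; and (iii) the time derivative hitting $U_\Psi\F^{-1}=\mathcal M_\Psi\mathcal D\F\mathcal M\F^{-1}$ through $\partial_t\Psi$ and through the $\mathcal M$, $\mathcal D$ factors. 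I would use the free Dollard identity \eqref{Dollard} together with the operator formulas of Lemma \ref{lemma_NLS_V_1} to rewrite $H_0U_\Psi\F^{-1}$ acting on $w_{\mathrm p}$; the half of $H_0$ coming from $\frac12|\partial_x\Psi|^2$ then cancels against $-\partial_t\Psi$ exactly by \eqref{HJ}, and the potential part $V U_\Psi\F^{-1}$ is what remains. After this cancellation the leading symbol is $(V-V_{T_1})=V^{\mathrm S}+V^{\mathrm C}+V^{\mathrm L}\chi(2x/(t+T_1))$ together with the cutoff-derivative and the genuinely lower-order pieces $r_j(t)$ from Lemma \ref{lemma_NLS_V_1} and the remainder $\partial_x^2\Psi$ from the second-derivative term in $H_0$.

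**Error estimates.** Each surviving term I would bound in $H^1$ using that $e^{-itH}$ is bounded on $H^1$ by \eqref{K8/1-1}, the dispersive bound $\|\mathcal D(t)w_{\mathrm p}\|_{L^\infty}\le t^{-1/2}\|\widehat{u_+}\|_{L^\infty}$, the $H^s$-bounds on $w_{\mathrm p}$ from Lemma \ref{lemma_nonlinear}, and the support condition $\supp\widehat{u_+}\subset\{|\xi|\ge c_0\}$. The short-range part contributes, after inserting the profile $\mathcal M_\Psi\mathcal D w_{\mathrm p}$ supported in $|x|\gtrsim c_0 t$ by the stationary-phase support localization, a factor $\<x\>^{-\rho_{\mathrm S}}\sim t^{-\rho_{\mathrm S}}$; the long-range cutoff term $V^{\mathrm L}\chi(2x/(t+T_1))$ and the cutoff derivative $t^{-1}x(\partial\chi)(x/t)$ are supported where $|x|\lesssim t$, but there $w_{\mathrm p}$'s profile is supported away from the origin only for $|x|\ge c_0 t$, so on the overlap these contribute a gain of one power of $t$ beyond $V^{\mathrm L}\sim t^{-\rho_{\mathrm L}}$, yielding $t^{-1-\rho_{\mathrm L}'}$; the $r_j$ terms already carry $t^{-\rho_{\mathrm L}'}$ and an extra $t^{-1}$ from the scaling, giving $t^{-1-\rho_{\mathrm L}'}$. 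The compactly supported singular term $V^{\mathrm C}$ is handled by choosing $T_2$ so large that for $t>T_2$ the profile $\mathcal D(t)w_{\mathrm p}$ has no mass on the fixed compact support of $V^{\mathrm C}$ (again by $|x|\ge c_0 t$), so this term vanishes identically; this is the role of $T_2$ depending on $V^{\mathrm C}$. Collecting, the slowest decay is $t^{-\min\{1+\rho_{\mathrm L}',\rho_{\mathrm S}\}}$, and the logarithm comes from Lemma \ref{lemma_nonlinear}.

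**Main obstacle.** The delicate point is the Hamilton--Jacobi cancellation: one must push $H_0=-\tfrac12\partial_x^2$ through $\mathcal M_\Psi$ and keep careful track of the cross term $\partial_x\Psi\,\partial_x$ and the double-derivative term $\partial_x^2\Psi$, verifying that after using \eqref{HJ} the only leading contribution is $(V-V_{T_1})$ acting on a profile localized to $|x|\ge c_0 t$, rather than $V$ itself. This localization argument—showing the profile $\mathcal M_\Psi\mathcal D w_{\mathrm p}$ truly lives in $|x|\gtrsim t$ in a sense strong enough to convert decay in $\langle x\rangle$ into decay in $t$—is the real content; the rest is bookkeeping with Lemmas \ref{lemma_R}, \ref{lemma_nonlinear}, and \ref{lemma_NLS_V_1}.
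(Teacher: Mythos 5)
Your skeleton is the same as the paper's: expand the commutator explicitly, use \eqref{HJ} to cancel $\tfrac12|\partial_x\Psi|^2+V_{T_1}$ against $-\partial_t\Psi$, and estimate the surviving pieces (a first-order transport operator with coefficients $\partial_x\Psi-x/t$ and $\partial_x^2\Psi-1/t$, cutoff-derivative terms, and the residual potential) term by term in $H^1$ via \eqref{K8/1-1}, Proposition \ref{proposition_HJ} and Lemmas \ref{lemma_nonlinear}, \ref{lemma_NLS_V_1}. The genuine gap is in your localization mechanism. The function appearing in $E_3$ is $U_\Psi(t)\mathcal F^{-1}w_{\mathrm p}=\mathcal M_\Psi\mathcal D(1+\mathcal R)w_{\mathrm p}$, not $\mathcal M_\Psi\mathcal D w_{\mathrm p}$. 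Only the latter is supported in $\{|x|\ge c_0 t\}$ (this is exact, by dilation and $\supp\widehat{u_+}\subset\{|\xi|\ge c_0\}$; no stationary phase is involved), whereas the correction $\mathcal M_\Psi\mathcal D\mathcal R w_{\mathrm p}$ has no support property at all, and by Lemmas \ref{lemma_R} and \ref{lemma_nonlinear} its norm decays only like $t^{-\delta}(\log t)^2$, which is strictly slower than the rate $t^{-\min\{1+\rho_{\mathrm L}',\rho_{\mathrm S}\}}$ you must prove (recall $\delta<\min\{\rho_{\mathrm L}',\rho_{\mathrm S}-1\}$). So ``inserting the profile supported in $|x|\gtrsim c_0t$'' does not convert $\langle x\rangle^{-\rho_{\mathrm S}}$ into $t^{-\rho_{\mathrm S}}$, and it fails decisively for the singular part: the tail $V^{\mathrm C}\mathcal M_\Psi\mathcal D\mathcal R w_{\mathrm p}$ does not vanish, its $L^2$ norm is only $O(t^{-1/2-\delta}(\log t)^2)$, and its $H^1$ norm need not even be finite, since $V^{\mathrm C}$ is merely $L^2$ and cannot be differentiated.

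The mechanism the paper uses --- and which your own expansion already contains but your estimates never invoke --- is that every term of $E_3$ carries one of the explicit cutoffs $(1-\chi_t)$, $(\chi')_t$, $(\chi'')_t$ coming from the ansatz itself; the localization is that of these multipliers, not of the profile. Then $\|(1-\chi_t)V^{\mathrm S}\|_{L^\infty}+\|(1-\chi_t)\partial_xV^{\mathrm S}\|_{L^\infty}\lesssim t^{-\rho_{\mathrm S}}$ because $|x|\ge c_0t/4$ on the support; $(1-\chi_t)V^{\mathrm C}\equiv0$ for $t>T_2$ as soon as $c_0T_2/4$ exceeds the radius of $\supp V^{\mathrm C}$, which is precisely how $T_2$ enters; and $(1-\chi_t)V^{\mathrm L}\chi(2x/(t+T_1))\equiv0$ for $t\ge 2T_1$ because the two cutoffs have disjoint supports --- there is no ``overlap with a gain of one power of $t$.'' Separately, be careful with the term where $\partial_t$ hits $\chi_t$: it equals $\frac{x}{t^2}(\chi')_t$, of size $t^{-1}$ on its support, which alone is not integrable in time; in the paper it is combined with the first-order cross term produced when $H_0$ is pushed through the cutoff, and only after this cancellation do the cutoff-derivative terms acquire coefficients $\frac it(\partial_x\Psi-\frac xt)(\chi')_t$ and $O(t^{-2})$ (the terms $I_2$, $I_3$ of the paper's proof), which is where the rate $t^{-1-\rho_{\mathrm L}'}$ actually comes from. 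Your proposal does not account for either of these points, so as written the key estimates are not justified; once you replace profile localization by the cutoff localization and track the $(\chi')_t$ cancellation, your argument coincides with the paper's.
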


\begin{proof}Recalling that $V^{\mathrm C}$ is compactly supported and $\chi_t(x)=1$ if  $|x|\le c_0t/4$, we take $T_2$ so large that $(1-\chi_t)V^{\mathrm C}\equiv0$ for $t>T_2$. 
It follows from direct calculations (see \cite[Lemma 4.4]{Kawamoto_Mizutani_TAMS} and its proof) that $E_3$ is decomposed into four parts as $E_3=I_1+I_2+I_3+I_4$, where 
\begin{align*}
I_1&=-i(1-\chi_t)\mathcal M_{\Psi}(t)\mathcal A_{\Psi}(t)\mathcal D(t)\mathcal F\mathcal M(t)\mathcal F^{-1}w_{\mathrm p},\\
I_2&=-\left\{\frac it\left(\partial_x\Psi-\frac xt\right)(\chi')_t+\frac{1}{2t^2}(\chi'')_t\right\}U_{\Psi}(t)\mathcal F^{-1}w_{\mathrm p},\\
I_3&=-\frac{1}{t^2}(\chi')_t\partial_x U_{\Psi}(t)\mathcal F^{-1}w_{\mathrm p},\\
I_4&=(1-\chi_t)(V^{\mathrm S}+V^{\mathrm C})U_{\Psi}(t)\mathcal F^{-1}w_{\mathrm p}=(1-\chi_t)V^{\mathrm S}U_{\Psi}(t)\mathcal F^{-1}w_{\mathrm p},
\end{align*}
where $(\chi')_t(x)=\chi'(x/t)$, $(\chi'')_t(x)=\chi''(x/t)$ and 
\begin{align*}
\mathcal A_{\Psi}(t)=\left(\partial_x\Psi-\frac xt\right)\partial_x+ { \frac12 \left( \partial_x^2\Psi-\frac{1}{t} \right) } .
\end{align*}
{For short, we set $a_1=\partial_x\Psi-t^{-1}x$ and $a_2=(\partial_x^2\Psi-t^{-1})/2$. Then $I_1$ is of the form
\begin{align*}
I_1=-i(1-\chi_t)\mathcal M_\Psi(a_1\partial_x+a_2)\mathcal D\mathcal F\mathcal M\mathcal F^{-1}{w_{\mathrm p}}=-i(1-\chi_t)(a_1U_\Psi\mathcal F^{-1}t^{-1}\partial_x+a_2U_\Psi\mathcal F^{-1})w_{\mathrm p}.
\end{align*}
Since $U_\Psi \mathcal F^{-1}$ is unitary on $L^2(\R)$, we obtain by using Proposition \ref{proposition_HJ} that
\begin{align*}
\|I_1\|\lesssim t^{-1-\rho_{\mathrm L}'}(\|\partial_x w_{\mathrm p}\|+\|w_{\mathrm p}\|)\le C(\|\widehat{u_+}\|_1) t^{-1-\rho_{\mathrm L}'}\log t.
\end{align*}
To estimate $\partial_x I_1$, we use Lemma \ref{lemma_NLS_V_1} to calculate
\begin{align*}
i \partial_x I_1&=-t^{-1}\chi'_t(a_1U_\Psi\mathcal F^{-1}t^{-1}\partial_x+a_2U_\Psi\mathcal F^{-1})w_{\mathrm p}\\
&\quad +(1-\chi_t)\{(\partial_xa_1)+a_1r_2\}U_\Psi\mathcal F^{-1}t^{-1}\partial_xw_{\mathrm p}+\{(\partial_x a_2)+a_2r_2\}U_\Psi\mathcal F^{-1}w_{\mathrm p}\\
&\quad +a_1 U_\Psi\mathcal F^{-1}ixt^{-1}\partial_xw_{\mathrm p}+a_2U_\Psi\mathcal F^{-1}ixw_{\mathrm p},
\end{align*}
which, combined with Lemma \ref{lemma_nonlinear} and Proposition \ref{proposition_HJ}, implies
\begin{align*}
\|\partial_x I_1\|
&\lesssim t^{-2-\rho_{\mathrm L}'}\|w_{\mathrm p}\|+t^{-\min\{2+\rho_{\mathrm{L}}',1+2\rho_{\mathrm L}'\}}\|\partial_xw_{\mathrm p}\|+t^{-1-\rho_{\mathrm L}'}(\|w_{\mathrm p}\|+\|x\partial_xw_{\mathrm p}\|+\|xw_{\mathrm p}\|)
\\
&\le C(\|\<x\>\widehat{u_+}\|_1)t^{-1-\rho_{\mathrm L}'}\log t.
\end{align*}
}
Hence,
\begin{align}
\label{lemma_NLS_V_4_1}
\|I_1\|_1\le C(\|\<x\>\widehat{u_+}\|_1) t^{-1-\rho_{\mathrm L}'}\log t. 
\end{align}
Similarly, Lemmas \ref{lemma_nonlinear} and \ref{lemma_NLS_V_1} and Proposition \ref{proposition_HJ} yield
\begin{align}
\label{lemma_NLS_V_4_2}
\|I_2\|_1&\lesssim (t^{-1-\rho_{\mathrm L}'}+t^{-2})\|\<x\>w_{\mathrm p}\|\le C(\|\<x\>\widehat{u_+}\|)  t^{-1-\rho_{\mathrm L}'},\\
\label{lemma_NLS_V_4_3}
\|I_3\|_1&\lesssim t^{-2}\|\<x\>w_{\mathrm p}\|\le C(\|\<x\>\widehat{u_+}\|)  t^{-2},
\end{align}
and, with Assumption \ref{assumption_A}, 
\begin{align}
\nonumber
\|I_4\|_1&\lesssim \left(\|(1-\chi_t)V^{\mathrm S}\|_{L^\infty}+\|(1-\chi_t)\partial_x V^{\mathrm S}\|_{L^\infty}+t^{-1}\|(\chi')_tV^{\mathrm S}\|_{L^\infty}\right)\|\<x\>w_{\mathrm p}\|\\
\label{lemma_NLS_V_4_4}
&\le C(\|\<x\>\widehat{u_+}\|)t^{-\rho_{\mathrm S}}.
\end{align}
The desired bound now follows from \eqref{lemma_NLS_V_4_1}--\eqref{lemma_NLS_V_4_4}. 
\end{proof}

We are now in a position to conclude the proof of Theorem \ref{theorem_NLS_V_1}. 
\begin{proof}[Proof of Theorem \ref{theorem_NLS_V_1}]
Recall that $\|f\|_X=\sup_{t\ge T}t^{\delta}(\log t)^{-b}\|f\|_{1}$. Let $\delta,b$ be as in Theorem \ref{theorem_NLS_V_1}, $\delta<\rho_{\mathrm L}'<\rho_{\mathrm L}$, $T>\max\{T_1,T_2\}$ and $v\in X$. Then 
$$
\|v(t)\|_1\le t^{-\delta}(\log t)^b R
$$
 for $t\ge T$. It thus follows from lemmas \ref{lemma_NLS_V_2}--\ref{lemma_NLS_V_4} and \eqref{theorem_NLS_V_1_proof_1} that \begin{align*}
\|\Phi[v ](t)\|_1
&\lesssim t^{-\delta}(\log t)^{2}+\int_t^\infty \Big(s^{-1-\delta}(\log s)^b\|\widehat{u_+}\|_{L^\infty}R+s^{-1/2-2\delta}(\log s)^{2b}R^2\\&\quad\quad \quad\quad \quad\quad \quad\quad +s^{-3\delta}(\log s)^{3b}R^3+s^{-1-\delta_0}(\log s)^{2}+s^{-1-\delta_0}\Big)ds\\
&\lesssim t^{-\delta}(\log t)^{2}+t^{-\delta}(\log t)^b\|\widehat{u_+}\|_{L^\infty}R+t^{1/2-2\delta}(\log t)^{2b}R^2+t^{1-3\delta}(\log t)^{3b}R^3.
\end{align*}
This estimate implies
\begin{align}
\label{proof_theorem_1}
\|\Phi[v ]\|_X\lesssim (\log T)^{2-b}+\|\widehat{u_+}\|_{L^\infty}R+T^{1/2-\delta}(\log T)^{b}R^2+T^{1-2\delta}(\log T)^{2b}R^3.
\end{align}
Similarly, if $v_1,v_2\in X$, then 
$$
\|v_j(t)\|_1\le t^{-\delta}(\log t)^b R,\quad \|v_1(t)-v_2(t)\|_1\le t^{-\delta}(\log t)^b\|v_1-v_2\|_X,
$$
for $t\ge T$ and $j=1,2$. 
Since $G(v_{1},u_{\mathrm p})-G(v_{2},u_{\mathrm p})$ is of the form
\begin{align*}
2\lambda (\Re(v_{1}u_{\mathrm p})v_{1}-\Re(v_{2}u_{\mathrm p})v_{2})+\lambda (|v_1|^2-|v_2|^2)u_{\mathrm p}+\lambda (|v_{1}|^2v_{1}-|v_{2}|^2v_{2}),
\end{align*}
we have
%and \begin{align*}\Re(w_{1}w_{\mathrm p})w_{1}-\Re(w_{2}w_{\mathrm p})w_{2}=w_{\mathrm p}(w_1^2-w_2^2)+\overline{w_{\mathrm p}}(|w_1|^2-|w_2|^2)\end{align*}
\begin{align*}
&\|G(v_{1}(t),u_{\mathrm p}(t))-G(v_{2}(t),u_{\mathrm p}(t))\|_1\\
&\lesssim \|u_{\mathrm{p}}(t)\|_{L^\infty}\left(\|v_{1}(t)\|_1+\|v_{2}(t)\|_1\right)\|v_{1}(t)-v_{2}(t)\|_1+\left(\|v_{1}(t)\|_1^2+\|v_{2}(t)\|_1^2\right)\|v_{1}(t)-v_{2}(t)\|_1\\
&\lesssim \left(t^{-1/2-2\delta}(\log t)^{2b}R+t^{-3b}(\log t)^{3b}R^2\right)\|v_{1}-v_{2}\|_X
%&\lesssim(t^{-2b}\<\log t\>R+t^{-3b}R^2)\|w_{1}-w_{2}\|_X
\end{align*}
Plugging this bound into the difference $\Phi[v_{1}](t)-\Phi_1[v_{2}](t)$, which is of the form
$$
i\int_t^\infty e^{-i(t-s)H}\left(2\lambda |u_{\mathrm p}|^2(v_1-v_2)+\lambda u_{\mathrm p}^2\overline{(v_1-v_2)}+ G(v_1,u_{\mathrm p})-G(v_2,u_{\mathrm p})\right)(s)ds,
$$
implies
\begin{align*}
&\|\Phi[v_1](t)-\Phi[v_2](t)\|_1\\
&\lesssim \int_t^\infty \left(s^{-1-\delta}(\log s)^b\|\widehat{u_+}\|_{L^\infty}+s^{-1/2-2\delta}(\log s)^{2b}R+s^{-3b}(\log s)^{3b}R^2\right)\|v_{1}-v_{2}\|_Xds\\
&\lesssim \left(\|\widehat{u_+}\|_{L^\infty}t^{-1}(\log t)^b+t^{1/2-2\delta}(\log t)^{2b}R+t^{1-3\delta}(\log T)^{3b}R^2\right)\|v_1-v_2\|_X 
\end{align*}
and hence
\begin{align}
\label{proof_theorem_2}
\|\Phi[v_1]-\Phi[v_2]\|_X\lesssim \left(\|\widehat{u_+}\|_{L^\infty}+T^{1/2-\delta}(\log T)^{b}R+T^{1-2\delta}(\log T)^{2b}R^2\right)\|v_1-v_2\|_X. 
\end{align}
It follows from \eqref{proof_theorem_1} and \eqref{proof_theorem_2} that for any $R>0$, there exist $\ep>0$ and $T_0>\max\{T_1,T_2\}$ such that if  $\|\widehat{u_+}\|_{L^\infty}<\ep$ and $T>T_0$, then $\Phi[v]$ is a contraction on $X(\delta,b,T,R)$. Therefore, we obtain a unique solution $u\in C([T,\infty);H^1(\R))$ to \eqref{integral_equation}. 

Next, by the completely same argument as that in the proof of \cite[Lemma 2.1]{Kawamoto_Mizutani_TAMS}, we find that $u$ satisfies the usual Duhamel formula: 
\begin{align}
\label{Duhamel}
u(t)=e^{-i(t-T)H}u(T)-i\int_T^t e^{-i(t-s)H}F(u(s))ds,\quad t\ge T. 
\end{align}
Hence, $u$ is (by definition) the $H^1$-solution to \eqref{NLS_V} subjected to the condition \eqref{theorem_NLS_V_1_1}. 

Finally, it is well-known that the Cauchy problem for \eqref{NLS_V} is globally well-posed in $H^1(\R)$. Indeed, since Assumption \ref{assumption_A} implies $V=V^{\mathrm S}+V^{\mathrm L}+V^{\mathrm C}$ with $V^{\mathrm S}+V^{\mathrm L}\in L^\infty(\R)$ and $V^{\mathrm C}\in L^p(\R)$ for all $1\le p\le 2$, $e^{-itH}$ satisfies the local-in-time Strichartz estimates
$$
\|e^{-itH}f\|_{L^p([-t_0,t_0];L^q(\R))}\le C\|f\|,\quad t_0>0,\ 1/p=1/2-1/q,\ 2\le p,q\le \infty,
$$
with some $C$ depending on $t_0$ (see Yajima \cite[Corollary 1.2]{Yaj}). Moreover, the cubic nonlinearity $|u|^2u$ is energy subcritical in one space dimension. Therefore, the standard argument for the case without potentials (see e.g. Ginibre--Velo \cite{Ginibre_Velo_1978}) also works well for  \eqref{NLS_V}. Namely, by using the above local-in-time Strichartz estimates, one can show the local well-posedness for  \eqref{NLS_V} in $H^1(\R)$ with the maximal existence time depending only on the $H^1$-norm of the initial datum $u(T)$. Moreover, the mass $\|u\|^2$ and the energy $E(u)=\|\nabla u\|^2/2+\<Vu,u\>+\lambda\|u\|_{L^4}^4/4$ are conserved by the flow of \eqref{NLS_V} and satisfies $E(u(t))+C\|u(t)\|^2\sim \|u(T)\|^2_1$ with some $C>0$ independent of $t$ which, together with the local well-posedness, leads the global well-posedness in $H^1(\R)$. As a consequence, one can extend the above solution $u\in C([T,\infty);H^1(\R))$ backward in time by solving the Cauchy problem with the initial datum $u(T)$, obtaining $u\in C(\R;H^1(\R))$. 
\end{proof}

\appendix

\section{The proof of Proposition \ref{proposition_HJ}}
\label{appendix_proof_proposition_HJ}

Here we prove Proposition \ref{proposition_HJ}. As already mentioned, the proof is almost identical to that of  \cite[Proposition 3.1]{Kawamoto_Mizutani_TAMS}. Hence we give an outline of the proof only,  and refer to \cite[Proposition 3.1]{Kawamoto_Mizutani_TAMS} for more details. The proof is based on the method bicharacteristic. As a first step, we consider the Hamilton equations
\begin{align}
\label{A_1}
\partial_t Z(t,\xi)=\Xi(t,\xi),\quad \partial_t \Xi(t,\xi)=-(\partial_x V_{T_1})(t,Z(t,\xi)),\quad t\ge0,
\end{align}
with the condition 
\begin{align}
\label{A_2}
Z(0,\xi)=0,\quad \ds\lim_{t\to \infty}\Xi(t,\xi)=\xi. 
\end{align}
Given $\xi\in \R$, there exists a unique solution $(Z(t),\Xi(t))$ to \eqref{A_1}--\eqref{A_2} satisfying 
\begin{align}
\label{A_3}
|\partial_\xi^k (Z(t, \xi)  - t\xi) |\lesssim \theta(t),
\quad
|\partial_\xi^k(\Xi(t, \xi) -\xi)|\lesssim (t+T_1)^{-\rho_{\mathrm L}},
\end{align}
for $k=0,1,2$, where 
$$
\theta(t)=\begin{cases}
\J{t} (t+T_1)^{-\rho_{\mathrm L}}&\text{if}\ \rho_{\mathrm L}<1,\\ \log(tT_1^{-1}+2)&\text{if}\ \rho_{\mathrm L}=1.
\end{cases}
$$
To prove this, we observe that the problem \eqref{A_1}--\eqref{A_2} is equivalent to 
\begin{equation}
\label{A_4}
\left\{
%\label{lemma_3_2_proof_1}
\begin{aligned}
Z(t,\xi)&=%t\xi+\int_0^t\int_r^\infty (\partial_x V_{T_1})(s,X(s))dsdr
%&=x+t\xi+\int_0^t s (\nabla V)(s,X(s))ds+t\int_t^\infty (\nabla V)(s,X(s))ds,\\
t\xi+\int_0^\infty \min\{t,s\}(\partial _x V_{T_1})(s,Z(s,\xi))ds,\\
\Xi(t,\xi)&=\xi+\int_t^\infty (\partial _x V_{T_1})(s,Z(s,\xi))ds. 
\end{aligned}
\right.
\end{equation}
Then the solution $(Z,\Xi)$ can be constructed by showing that the map from $Z(t)$ to the RHS of the first equation of \eqref{A_4} is a contraction on the complete metric space
\begin{align*}
\mathcal Z&=\{Z\in C^1([0,\infty);C^2(\R))\ |\ \|Z\|_{\mathcal Z}\le M\},\\
\|Z\|_{\mathcal Z}&=\sup_{\xi\in \R}\sup_{t\ge 0}\frac{1}{\theta(t)}\sum_{0\le k\le 2}|\partial_\xi^k(Z(t)-t\xi)|,\quad
d_{\mathcal Z}(Z_1,Z_2)=\|Z_1-Z_2\|_{\mathcal Z}
\end{align*}
with some $M>0$ independent of $T_1$ and $\xi$. Next, for sufficiently large $T_1>$ and all $t\ge0$, we let $\xi\mapsto \eta(t,\xi)$ be the inverse of $\xi\mapsto \Xi(t,\xi)$ and define $S(t,\xi):=\varphi(t,\eta(t,\xi))$ where
\begin{align*}
\varphi(t, \xi) = \int_0^t\left(\frac12|\Xi(\tau,\xi)|^2+V_{T_1}(\tau,Z(\tau,\xi))+Z(\tau,\xi)\cdot (\partial_t \Xi)(\tau,\xi)\right)d\tau.
%\left\{   h( \tau , Z(\tau, \eta) , \Xi (\tau , \eta) ) - Z(\tau, \eta) (\partial_{\tau} \Xi) (\tau , \eta) \right\}  d \tau . 
\end{align*} 
By calculating $\partial_\xi S(t,\xi)$ and $\partial_t[S(t,\Xi(t,\xi))]$, and using \eqref{A_1}, one finds that $S(t,\xi)$ satisfies
\begin{equation}
\left\{\nonumber
\begin{aligned}
&\partial _t S(t,\xi) = \frac12 |\xi| ^2 +V_{T_1}(t, \nabla _{\xi} S(t,\xi)),\\ 
&\partial_\xi S(t,\xi) =Z(t,\eta(t,\xi)),\\
&|\partial_\xi^k (\partial_\xi S(t,\xi) - t\xi)|\lesssim\widetilde\theta(t),\quad k=0,1,2. 
\end{aligned}
\right.
\end{equation}
where 
$$
\widetilde \theta(t)=\begin{cases}
t(t+T_1)^{-\rho_{\mathrm L}}&\text{if}\ \rho_{\mathrm L}<1,\\ \log(tT_1^{-1}+1)&\text{if}\ \rho_{\mathrm L}=1. 
\end{cases}
$$
Then the map $\R\ni \xi\mapsto t^{-1}\partial_\xi S(t,\xi)$ is diffeomorphic and its inverse $\widetilde \Theta(t,\xi)$ satisfies $$\partial_\xi S(t,\widetilde \Theta(t,\xi))=t\xi.$$ Define, for $x\in \R$ and $t>0$,  
$$
\Theta(t,x)=\widetilde \Theta(t,x/t),\quad \Psi(t,x)=x\Theta(t,x)-S(t,\Theta(t,x)). 
$$
It follows by plugging $\xi=\Theta(t,x)$ into the above estimates for $S(t,\xi)$ that
\begin{align}
\label{A_5}
|\partial_x^k(\Theta(t,x)-t^{-1}x)|\lesssim t^{-k-1}\widetilde\theta(t),\quad t>0,\quad x\in \R, 
\end{align}
for $k=0,1,2$. Moreover, $\Psi$ satisfies $\partial_x\Psi(t,x)=\Theta(t,x)$ and hence
$$
\partial_t \Psi=-(\partial_t S)(t,\Theta)=\frac12|\Theta|^2-V_{T_1}(t,(\partial_xS)(t,\Theta))=-\frac12|\partial_x \Psi|^2-V_{T_1}(t,x).
$$
Finally \eqref{proposition_HJ_1} follows from \eqref{A_5}. This completes the proof of Proposition \ref{proposition_HJ}.

\section*{Acknowledgments}
%\begin{color}{blue}
M. K. is partially supported by JSPS KAKENHI Grant Number JP24K06796. H. M. is partially supported by JSPS KAKENHI Grant Numbers JP21K03325 and JP24K00529. 
%\end{color}

%%%%%%%%%% Bibliography %%%%%%%%%%%%%%%%%%%

\end{document}